\theoremstyle{plain}
\newtheorem{theorem}{Theorem}[section]
\newtheorem{lemma}[theorem]{Lemma}
\newtheorem{proposition}[theorem]{Proposition}
\theoremstyle{definition}
\def\Shape(#1){\operatorname{Shape}(#1)}
\newcommand{\magma}{{\sc Magma}}
\newcommand{\Z}{{\mathbb{Z}}}
\newcommand{\Q}{{\mathbb{Q}}}
\newcommand{\C}{{\mathbb{C}}}
\newcommand{\Prj}{{\mathbb{P}}}
\newcommand{\Cc}{{C_{can}}}
\newcommand{\gl}{\mathfrak{sl}}
\newcommand{\Osh}{{\mathcal{O}}}
\newcommand{\Cbe}{{\mathscr{C}}}
\newcommand{\res}{\mbox{\tt res}}
\begin{document}
\title{Explicit Solution By Radicals, Gonal Maps and Plane Models of Algebraic
Curves of Genus 5 or 6}
\author{Michael Harrison}

\begin{abstract}
We give explicit computational algorithms to construct minimal degree (always $\le 4$)
ramified covers of $\Prj^1$ for algebraic curves of genus 5 and 6. This completes the
work of Schicho and Sevilla (who dealt with the $g \le 4$ case) on constructing radical
parametrisations of arbitrary genus $g$ curves. Zariski showed that this is impossible for
the general curve of genus $\ge 7$. We also construct minimal degree birational plane
models and show how the existence of degree 6 plane models for genus 6 curves is related
to the gonality and geometric type of a certain auxiliary surface.
\end{abstract}
\maketitle

\section{Introduction}
\bigskip

A famous result of Zariski (see, eg, \cite{Fri89}) states that the general algebraic
curve $C$ over $\C$ of genus $> 6$ is not soluble by radicals over a rational function
field. That is, it is not possible to write a set of coordinate functions
of $C$ as radical expressions of a single parameter $x$ that is
itself a rational function on $C$: the function field of $C$, $\C(C)$, cannot be
expressed as a radical extension of a $\C(x)$ subfield.

In contrast to this, Brill-Noether theory tells us that a curve of genus
$g$ over an algebraically-closed field $k$ (any characteristic) has a map to $\Prj^1$
of degree $\le \lceil g/2 \rceil + 1$ \cite{KL74}. Thus the function field of
any $C$ of genus $\le 6$ is an extension of degree $\le 4$ of a rational function
field and so $C$ is soluble by radicals.

Once we have found $k(x)$ in $k(C)$ of index $\le 4$, the standard formulae for
the roots of a polynomial of degree $\le 4$ give radical expressions for the
generators of $k(C)$. The main computational problem is the construction
of the smallest degree $d$ map from $C$ to $\Prj^1$ ($d$ is traditionally referred to
as the {\it gonality} of $C$).

For genus $\le 4$, when $d \le 3$, explicit
algorithms for this have been described in \cite{ScSe09}. Here, we will fill the gap by
giving explicit algorithms for the genus $5$ and $6$ cases. When $C$ is not
hyperelliptic, the algorithms are based on
the theory of rational scrolls containing the canonical embedding $\Cc$ of $C$
as described in \cite{Sch86}. The case that requires the most work to produce an
algorithm from the general theory is for the general curve of genus 6 (with gonality
4). There is also the added complication of distinguishing between the
gonality 3 case and the $g^2_5$ case (gonality 4) for genus 6. The general genus 5 case
(gonality 4 also) is fairly straightforward both theoretically and computationally.
We include it for completeness. The majority of the paper, though, deals with genus 6.

The construction of minimal degree covers of $\Prj^1$ (we sometimes refer to these as
{\it gonal} maps) is an interesting and important computational problem in its own right,
with applications outside of radical parametrisation. For example, it is useful to
be able to express the function field of a curve as a finite extension
of the rational function field $k(x)$ of as small a degree as possible when applying
function field algorithms like those of Florian Hess or Mark van Hoeij to the
computation of Riemann-Roch spaces and the like. Of course, we assume that such algorithms
are available for some of the operations on our initial curve here. However, the possibility
of reexpressing the curve as a smaller degree extension of $k(x)$ for more general analysis
is a valuable one.

Of related interest is the construction of a smallest degree birational plane model of the
curve in $\Prj^2$ (singular, in general). Fortunately, in a number of cases, these can be
constructed directly from the same data generated from the algorithm to construct gonal
maps. In particular, this is true in the generic genus 6 case. In addition, in the genus
6, 4-gonal case, we will show that there are only finitely many gonal maps up to equivalence
precisely when the curve has a degree 6 plane model. Because of these relations and the intrinsic
interest, we also show how our methods allow us to construct these minimal degree models
and analyse the question of the existence of plane models of certain degrees in some detail.

The contents of the paper are as follows. In the next section, we introduce the general
setup and discuss the 3-gonal cases and the plane quintic genus 6 case. The third section
gives the algorithm for 4-gonal, genus 5 curves. The fourth section covers the general
(Clifford-index 2) genus 6 case. There, we review Schreyer's analysis of the minimal
free polynomial resolution of the canonical coordinate ring, present the algorithms
for finding gonal maps and plane models and, finally, prove some results about the
existence of degree 6 plane models. We also consider the example of the genus 6 modular
curve $X_0(58)$, using our algorithm to construct a degree 4 rational function, 
a degree 6 plane model and a radical parametrisation.

The algorithms are applied to $C$ defined over a non-algebraically closed base field
$k$ and construct degree $d$ maps to $\Prj^1$ over a finite extension of $k$. In general,
no degree $d$ map exists over $k$.

I would like to thank Josef Schicho and David Sevilla for introducing me to the problem
of computing radical parametrisations.
\bigskip

\section{Generalities and the non-generic cases}
\bigskip
The curve $C$ of geometric genus $g$ is assumed to be initially given in a general 
geometric realisation: 
by a birational (maybe singular) model in affine or projective space of arbitrary
dimension or as an algebraic function field. There are known algorithms to compute
the canonical image $\Cc$ in $\Prj^{g-1}$ along with an explicit birational map from
$C$ to $\Cc$ or an isomorphism of their function fields. For example, the function field
methods of Hess \cite{Hes02}. We also need to explicitly compute the minimal free
resolution of the defining ideal $I_\Cc$ of $\Cc$ as a graded $R$-module, where
$R = k[x_1,\ldots,x_g]$ is the coordinate ring of $\Prj^{g-1}$. There are also well-known
algorithms for this, computing syzygies via Gr\"obner bases.

Throughout the paper, we use the language of linear systems. $g^r_d$ will denote a
(not necessarily complete) linear system on $C$ of degree $d$ and projective dimension $r$.
Such a system gives a morphism of $C$ into $\Prj^r$ up to a linear change of coordinates,
as described in Chapter II, Section 7 of \cite{H77}.
If the $g^r_d$ is basepoint free, this map will be a finite map of degree
$d$ onto its image. If the system has a base locus of degree $e$, then the map will be of
degree $d-e$. The image lies in no hyperplane. Thus, a degree $d$ cover of $\Prj^1$,
up to automorphisms of $\Prj^1$, is equivalent to a basepoint free $g^1_d$ on $C$.
This is also equivalent to a rational function of degree $d$ on $C$ up to equivalence,
where $f$ and $g$ are equivalent if $g = (af+b)/(cf+d)$ for a non-singular matrix
$\left({a \atop c}{b\atop d}\right)$.
 
The results in \cite{Sch86} (or \cite{Eis05}) tell us that a
$g_d^1$
on $C$ will give rise to a rational scroll $X$ in $\Prj^{g-1}$ of dimension $d-1$
containing $\Cc$ such that the ruling of $X$ cuts out the $g^1_d$ on $\Cc$.
Furthermore, the minimal free resolution of $I_X$, the defining ideal of $X$,
occurs as direct summand of the {\it quadratic strand} of the minimal free
resolution of $I_\Cc$ (see Section 6C and Appendix A2H of\cite{Eis05}). Given $X$,
the map to the $\Prj^1$ base of the ruling, which induces the degree $d$ map on $\Cc$,
can be computed in a number of ways - the Lie algebra method, for example
\cite{ScSe09}. When the scroll is of codimension 2 in $\Prj^{g-1}$, the map can be
read off directly from the resolution of $I_X$. This occurs for a $g^1_3$ for
genus 5 or a $g^1_4$ for genus 6.
\bigskip

The problem of constructing gonal maps can thus be reduced to explicitly constructing direct sum 
subcomplexes of the canonical resolution $F$ for $I_\Cc$ that will give the resolution of
such a scroll. The gonality $d$ of $C$ can be read off from the shape of $F$
(\cite{Sch86}). The hyperelliptic case ($d = 2$) can be recognised by the
arithmetic genus of the canonical image $\Cc$ being zero. Then, $\Cc$ is a rational
normal curve, $C \rightarrow \Cc$ is of degree 2, and a parametrisation of $\Cc$
can be constructed in a number of ways: the Lie algebra method again, 
repeated adjunction mappings or using function field methods (\cite{Hes02}).

The classical $d=3$ case (Petri's Theorem) is dealt with in \cite{ScSe09} using
the Lie algebra method. This is characterised
by $I_\Cc$ having a minimal basis that includes cubic forms as well as quadrics
(for $g \ge 4$). The minimal resolution of $I_X$ is then given by the full quadratic
strand of the canonical resolution. No extra computation is required to find $X$ :
$I_X$ is just the ideal generated by the space of quadric forms in $I_\Cc$. There
is the extra complication in the genus 6 case where $X$ may be the Veronese surface
in $\Prj^5$ rather than a rational scroll. Here $C$ is isomorphic to a non-singular plane
curve of degree 5 and the gonality is 4 rather than 3. In our algorithm, this
possibility is discovered when the Lie algebra method is applied to $X$ giving
the sub-Lie algebra of trace 0 6x6 matrices corresponding to the closed subgroup of
automorphisms of $\Prj^5$ that leave $X$ invariant. When $X$ is Veronese, this Lie
algebra is simple of dimension 8, isomorphic to $\gl_3$ over $\bar{k}$. In the scroll
cases, the algebra is of dimension 7 or 9 or is soluble of dimension 8.

When $X$ is a Veronese surface, the Lie algebra method gives an isomorphism $\phi$ 
of $X$  to $\Prj^2$ over a finite extension of $k$. This comes from computing a linear isomorphism
from $X$ onto the standard Veronese surface $X_0$ in $\Prj^5$ by determining a linear
isomorphism within $\gl_6$ taking the Lie algebra of $X$ to that of $X_0$.
Under $\phi$, $\Cc$ is mapped to a non-singular degree 5 plane curve $C_5$. Then, the
degree 4 maps to $\Prj^1$ are precisely the projections from points on $C_5$.

This just leaves the generic cases for genus 5 and 6 curves where the Betti diagrams
for the minimal free resolutions of the canonical coordinate rings are as follows
(\cite{Sch86}).
\smallskip

\makebox[15cm][s]{
  \begin{tabular}{c|cccc}
   & 0 & 1 & 2 & 3 \\ \hline
 0 & 1 & - & - & - \\
 1 & - & 3 & - & - \\
 2 & - & - & 3 & - \\
 3 & - & - & - & 1 \\
  \end{tabular}

  \begin{tabular}{c|ccccc}
   & 0 & 1 & 2 & 3 & 4 \\ \hline
 0 & 1 & - & - & - & - \\
 1 & - & 6 & 5 & - & - \\
 2 & - & - & 5 & 6 & - \\
 3 & - & - & - & - & 1 \\
  \end{tabular}
}
\smallskip

\noindent The algorithms for these cases will be given in the two following sections.
\medskip

We should note here some situations when it is easy to find gonal maps. If we have a
plane model of $C$ of degree $d$ and $P$ is a point on $C$ of multiplicity $m$, then
projection from $P$ gives a degree $d-m$ map to $\Prj^1$. A small degree (singular)
plane model with a singular point of sufficiently high multiplicity can, therefore,
give us gonal maps by simple projection.

Assume that we have computed a gonal map for $C$ which gives us a rational function $t$
on $C$ of degree $\le 4$. We can then compute a radical parametrisation as follows.
Firstly, we choose a second rational function $x$ that generates the function field
$\bar{k}(C)$ over $\bar{k}(t)$, preferably a coordinate function of $C$ in an 
affinisation of its given algebraic representation (N.B.: $\bar{k}(C)/\bar{k}(t)$ is
separable for gonal maps). We then express all coordinate functions on $C$ as
rational functions in $x$ and $t$. This can be achieved with standard elimination
techniques using Gr\"obner bases or resultants. Finally, we find a radical expression
for $x$ in terms of $t$ using the minimal polynomial of $x$ over $\bar{k}(t)$.
This polynomial is of degree $\le 4$ and we can just use the standard formulae for
its roots as radical expressions in the coefficients.

\section{Genus 5}
\bigskip

In the generic (no $g^1_d$ for $d \le 3$) case, the $g^1_4$s correspond to degree 2,
dimension 3 rational scrolls in $\Prj^4$ containing $\Cc$ ((6.2) \cite{Sch86}).
These are defined by singular quadrics $Q$ in the 3-dimensional space of quadrics
in $I_\Cc$.

Such singular quadrics are given by 5-variable quadric forms of rank 4 or 3 (no
quadrics of rank $<= 2$ can occur in $I_\Cc$ since such quadrics are geometrically
reducible and $\Cc$ isn't contained in any hyperplane). These correspond to rational
scrolls of type $S(1,1,0)$ or $S(2,0,0)$ in Schreyer's notation.

The first type are cones over non-singular quadric surfaces $S$ in $\Prj^3$. The
projection(s) to $\Prj^1$ are given by projection from the unique
singular point followed by one of the 2 classes of fibre projection from $S$ to
$\Prj^1$.

The second type are cones over non-singular conics $C_0$ in $\Prj^2$. For these,
the projections to $\Prj^1$ are given by projection from the singular locus
(a line) followed by a birational map of $C_0$ to $\Prj^1$.

In either case, the projections may be defined over a quadratic extension of the
base field $k$ and it is well-known how to compute them explicitly.
\bigskip

It remains to find such singular quadrics and this is also computationally fairly
straightforward.

If $Q_1,Q_2,Q_3$ form a basis for the space of quadrics in $I_\Cc$, we can write
the general quadric as $Q_{x,y,z} = xQ_1+yQ_2+zQ_3$ for variable $x,y,z$. The
condition that $Q_{x,y,z}$ is singular leads to a single degree 5 homogeneous equation
$F(x,y,z) = 0$ as follows.

The 5 partial derivatives of $\partial Q_{x,y,z}/\partial x_i$ with respect to the
5 coordinate variables $x_i$ of $\Prj^4$ are linear forms in the $x_i$ with coefficients
given by linear forms in $x,y,z$. The condition that all partial derivatives vanish
at a point of $\Prj^4$ is that the 5x5 matrix of coefficients of the partials w.r.t
the $x_i$ is singular: ie, that its determinant vanishes. $F(x,y,z)$ is just given
by this determinant.

Thus we explicitly find a one-dimensional projective family of singular quadrics.
This accords with Brill-Noether theory which tells us that the dimension of the family of
$g^1_4$s is at least one. We can now choose a non-trivial solution of $F(x,y,z) = 0$ over a
finite extension of $k$ of degree $\le 5$. In general, $F$ will be non-singular and
irreducible, as is easily checked in random examples. It is a number-theoretically very
difficult question to check whether a solution exists over $k$ when $k=\Q$.
\bigskip

\noindent{\it Plane models.} If $C$ is birationally equivalent to a plane curve of degree
$d$ in $\Prj^2$, $d\ge 5$ by the arithmetic genus formula. A plane model of degree 5 exists if
and only if $C$ has a basepoint-free $g^2_5$. If $K$ denotes the canonical divisor class on
$C$, then $g^2_5$s correspond to $g^1_3$s by Riemann-Roch under $D \leftrightarrow K-D$.
So, for a plane model of degree 5, the gonality of $C$ must be less than 4. A degree 5
plane model would have to have a node or cusp as the unique singularity for its normalisation
$C$ to have genus 5. It is then easy to see that the canonical linear system would separate
points on $C$ because, by adjunction, it is given by the system of plane quadrics passing 
through the singular point. Hence, $C$ cannot be hyperelliptic so must have gonality exactly
3.

Conversely, if $C$ has gonality 3, it has a degree 5 plane model.
$\Cc$ is contained in a rational scroll $X$ which is
isomorphic to the Hirzebruch surface $X_1$ embedded in $\Prj^4$ via the very ample
divisor $C_0+2f$, in the notation of Chapter V, Section 2 of \cite{H77}. $\Cc$ has divisor
class $3C_0+5f$ on $X_1$. Contraction of the unique (-1)-curve $C_0$ on $X_1$ gives a
birational morphism of $X_1$ onto $\Prj^2$ that maps $\Cc$ birationally onto a degree
5 curve (the self-intersection of the image of $\Cc$ is 25).

Computationally, a rational map from $X$ to $\Prj^2$ that contracts $C_0$ can be constructed
by projecting from any line in the ruling of $X$. Lines
in the ruling of $X$ can be located by the Lie algebra method again, just as the fibration
$X \rightarrow \Prj^1$ is computed in \cite{ScSe09}.
\medskip

It is generally easy to find a degree 6 model for 4-gonal genus 5 curve.
If $D$ is an effective divisor of degree 6 on $C$, then, by Riemann-Roch, dim $|D| \ge 2$ if
and only if $|K-D|$ is
non-empty, i.e., $D$ is the complement of a subcanonical degree 2 divisor. 
Any effective divisor of degree 2 is subcanonical. If $C$ is
not hyperelliptic, then we can take $D$ equal to the canonical complement of any two distinct 
points, $P,Q$ on $C$. $|D| = |K-(P+Q)|$ is given by the hyperplane sections of $\Cc$ in
$\Prj^4$ that contain the line $L$ joining the images of $P$ and $Q$. So dim $|D|$ is exactly
2 and the rational map $C \rightarrow \Prj^2$ is just given by projection from $L$. If this
rational map is not birational on $C$, then it must give a 2-1 map onto a smooth cubic
(if it were 3-1 onto a conic or the cubic was singular then $C$ would have gonality $\le 3$),
so $C$ is a 2-1 cover of a genus 1 curve $E$.

In this special case, $E$ is the unique genus
1 curve that $C$ is a double cover of ({\it c.f.} Remark 2 of Section~\ref{ell_cone_case}).
I think that, in this case, all $g^1_4$s on $C$ are pullbacks of $g^1_2$s on $E$. This is true
in the genus 6 case - as I show in the next section - but I haven't checked it properly for genus 5.
It would imply that $D=K-(P+Q)$ gives a birational map from $C$ into $\Prj^2$ if
$P$ and $Q$ don't lie over the same point in $E$. This is because $\mbox{dim}|D-R| = 1+
\mbox{dim}|P+Q+R|$ and $\mbox{dim}|D-R-S| = \mbox{dim}|P+Q+R+S|$. If $R$ doesn't lie over
either of the images of $P$ or $Q$ in $E$, $|P+Q+R+S|$ cannot be the pullback of a $g^1_2$
for any $S$, so must have dimension zero. Then $\mbox{dim}|D-R| = 1$ and $ \mbox{dim}|D-R-S| = 0$
for any $S$. Thus, the linear system $|D|$ separates $R$ from other points (and
tangent vectors at $R$).

When $C$ is hyperelliptic, the same argument shows that any $|D|$ gives a map to $\Prj^2$
that factors through the canonical projection $C\rightarrow\Prj^1$, so cannot be birational.
When $R$ is not a Weierstrass point and is distinct from $P,Q$ and their images under the
hyperelliptic involution $w$ ($P$ could be $Q$ here), $\mbox{dim}|P+Q+R+w(R)| = 1+
\mbox{dim}|P+Q+R|$. This means that $\mbox{dim}|D-R|=\mbox{dim}|D-R-w(R)|$.

\section{Genus 6 (generic)}
\bigskip

This is the most difficult case. There are $g^1_4$s but no $g^1_d$ for $d < 4$ and no $g^2_5$
(equivalently, $C$ has Clifford-index 2).
The rational scrolls $X$ to be constructed, which contain $\Cc$ and induce a $g^1_4$ on it via the ruling,
are of degree 3 and dimension 3 in $\Prj^5$ (the points in any divisor of the $g^1_4$ span
a linear space of dimension 2 in the canonical embedding by geometric Riemann-Roch, so the dimension
must be 3, and the degree is the codimension plus one).

Throughout this section, $R$ will
denote the coordinate ring $k[x_1,\ldots,x_6]$ of the $\Prj^5$ containing $\Cc$ and we
will write $I_Z$ for the saturated ideal of $R$ that corresponds to a closed subscheme
$Z$ of $\Prj^5$. As usual, $R(n)$, $n \in \Z$, will denote the
graded $R$-module equal to $R$ as a plain $R$-module but having a shift in the
grading so that $R(n)_m = R_{n+m}, m \in \Z$. All $R$-module homomorphisms between
graded $R$-modules preserve the grading. Elements of modules of the form
$R(a_1)^{m_1}\oplus\ldots\oplus R(a_r)^{m_r}$ will be thought of as row vectors of
length $m_1+\ldots +m_r$ with entries in $R$. Thus, a homomorphism from
$R(a_1)\oplus\ldots\oplus R(a_r)$ to $R(b_1)\oplus\ldots\oplus R(b_s)$ will be
represented by an $r$-by-$s$ $R$-matrix whose $(i,j)$-th entry will be a homogeneous polynomial
of degree $b_j-a_i$ (necessarily zero, if $a_i > b_j$) acting on row vectors by right 
multiplication.

The minimal resolution of $I_X$ as a graded $R$-module is of the form
$R(-2)^3 \leftarrow R(-3)^2 \leftarrow 0$. This complex
must occur as a graded direct summand (see Prop. 6.13 of \cite{Eis05}) of a minimal
resolution of $R/I_\Cc$
$$\res:\ \ R \stackrel{\psi}{\longleftarrow} R(-2)^6 \stackrel{\phi}{\longleftarrow} R(-3)^5\oplus R(-4)^5
   \leftarrow R(-5)^6 \leftarrow R(-6) \leftarrow 0 $$
the $R(-2)^3$ being a direct summand of $R(-2)^6$ and the $R(-3)^2$ a direct summand
of $R(-3)^5$.

We fix a particular such resolution and \res\ will refer to it throughout this section.
\medskip

Firstly, we have the following 
\begin{lemma} There is a 1-1 equivalence between $g^1_4$s on $C$ and the degree 3,
dimension 3 rational scrolls in $\Prj^5$ containing $\Cc$.
\end{lemma}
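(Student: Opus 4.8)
The plan is to set up two maps and show they are mutually inverse. The forward map $\Phi$ sends a $g^1_4$, say the pencil $|D|$, to the union $X$ of the linear spans $\langle D'\rangle$ of its members $D'$. By geometric Riemann--Roch each member spans a plane, $\dim\langle D'\rangle = \deg D' - 1 - \dim|D'| = 4-1-1 = 2$, so $X$ is swept out by a $\Prj^1$-family of $\Prj^2$'s and is a $3$-fold; by the results of \cite{Sch86} (or \cite{Eis05}) recalled in Section~2 it is a rational normal scroll, and as a nondegenerate irreducible $3$-fold of minimal degree in $\Prj^5$ its degree is the codimension plus one, namely $3$. Those same results guarantee that the ruling of $X$ cuts out the original $g^1_4$ on $\Cc$; hence, writing $\Psi$ for the map ``take the linear series cut on $\Cc$ by the ruling of the scroll'', we already have $\Psi\circ\Phi = \mathrm{id}$, so $\Phi$ is injective.

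The real content is that $\Psi$ is everywhere defined and surjective, i.e.\ that the ruling of an \emph{arbitrary} degree $3$, dimension $3$ scroll $X\supseteq\Cc$ cuts out a genuine $g^1_4$. Since $\Cc$ is nondegenerate it is not contained in any single ruling plane $F\cong\Prj^2$, so the rational map to the base $\Prj^1$ of the ruling restricts, on the smooth curve $\Cc$, to a morphism $g\colon\Cc\to\Prj^1$ of some degree $a\ge 1$; this $g$ defines a basepoint-free $g^1_a$ whose general member $D=g^{-1}(\mathrm{pt})$ still lies in a ruling plane, so $\dim\langle D\rangle = e$ with $0\le e\le 2$. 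I would pin down $a$ by combining geometric Riemann--Roch with the Clifford-index hypothesis. Geometric Riemann--Roch gives $e = a-1-\dim|D|$, hence $h^0(D)=a-e$ and, by Riemann--Roch, $h^1(D)=5-e$; since the gonality is $4$ we have $a\ge 4$, so both are at least $2$ and $D$ contributes to the Clifford index. Then
\[
\operatorname{Cliff}(D) = \deg D - 2\dim|D| = a - 2(a-1-e) = 2+2e-a \ \ge\ \operatorname{Cliff}(C) = 2,
\]
forcing $a\le 2e\le 4$. Together with $a\ge 4$ this yields $a=4$ (and $e=2$), so $\Psi(X)$ is indeed a $g^1_4$.

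It remains to see that $\Phi$ and $\Psi$ are inverse. For the series $\Psi(X)$ just found, $\dim|D| = a-1-e = 1$, so it is a complete pencil whose members span planes ($e=2$); their union is swept out by the ruling planes of $X$ and therefore equals $X$, giving $\Phi(\Psi(X))=X$. Thus $\Psi$ is surjective and two-sided inverse to $\Phi$, using that a scroll of this type carries a unique ruling, so that $\Psi$ is single-valued. The step I expect to cost the most care is the well-definedness of $\Psi$: controlling the span $e$ of the general ruling fibre and, in the cone cases $S(2,1,0)$ and $S(3,0,0)$ where the ruling planes share a vertex, checking that extending the ruling to a morphism on $\Cc$ does not disturb the degree count --- it is precisely here that the nondegeneracy of $\Cc$ and the Clifford-index-$2$ hypothesis together exclude every degree other than $4$.
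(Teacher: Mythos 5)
Your proof is correct and follows essentially the same route as the paper: the forward map is the union of the planar spans of the divisors of the pencil (with the scroll structure and the fact that the ruling cuts back the original pencil quoted from Schreyer, exactly as the paper does), and the backward direction pins down the degree of the series cut by the ruling using geometric Riemann--Roch together with the section's standing hypothesis. The only difference is one of packaging: you exclude degrees $a\ge 5$ via the Clifford-index inequality $\operatorname{Cliff}(D)=2+2e-a\ge\operatorname{Cliff}(C)=2$, whereas the paper argues on the residual series, noting $|K-D|$ would be a $g^2_5$ or a $g^2_e$ with $e\le 4$ (hence a $g^1_f$ with $f<4$) --- the same computation in different clothing, since Clifford index $2$ is by definition the conjunction of ``gonality $4$'' and ``no $g^2_5$''.
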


\begin{proof} A $g^1_4$ gives a unique rational scroll of the correct type that is
 the union of the degree 2 linear spans of the $D \in g^1_4$
on $\Cc$. Note that the $g^1_4$ pencil is a {\it complete, basepoint-free} linear
system as otherwise
there would exist an $E < D \in g^1_4$ giving a $g^1_d$ for $d < 4$. This
would contradict the gonality 4 assumption on $C$.

On the other hand a degree 3, dimension 3 rational scroll containing $\Cc$ cuts out
a $g_d^1$ with dim $|K-D| = 2$ for $D \in g^1_d$ (Section 2,\cite{Sch86}).
$K$ denotes the canonical divisor class on $C$. It remains
to show that $d$ must be 4 here. But $d$ must be at least 4, as $C$ has gonality 4.
deg($K-D$) is $10-d$, so if $d \ge 5$ then we would have a $g^2_5$ or a $g^2_e$ for
$e \le 4$. The latter would lead to a $g^1_f$ for $f < 4$. Neither of these is possible by
assumption.
\end{proof}
\bigskip

Our approach here is to explicitly compute such scrolls by a direct, brute-force method.
This does appear to work quite efficiently in practise. An alternative would be
to try to work with the easily computable surface $Y$ discussed in the next section.
The next section contains a summary of more detailed structure of \res\ that we need to
refer to. Our algorithm will be given in the following section.
 
\subsection{The Canonical Resolution of $\Cc$}
\label{sec_can_res}
\bigskip

Fixing a $g^1_4$ on $C$ (which exist by \cite{KL74}) and corresponding rational
scroll $X$ containing $\Cc$, Sections (6.2)-(6.5) of \cite{Sch86} give the following more 
detailed description of the way that the canonical resolution \res\ is built up.
\medskip

$\Cc$ is the complete intersection of two divisors $Y$ and $Z$ of $X$. The surfaces $Y$ and $Z$ 
are (absolutely) irreducible since $\Cc$ is. Writing $H$ for the
hyperplane class on $X$ and $R$ for the class of the ruling, we have the follwing equivalence of 
rational divisor classes on $X$
$$ Y\sim 2H-R\qquad Z\sim 2H$$
In Schreyer's notation, $f=3$, $b_1=1$ and $b_2=0$ here. The case $(b_1,b_2)=(2,-1)$ cannot
occur, as it would lead to $C$ having a $g^1_3$ or $g^2_5$ as Schreyer notes in his (6.3).

In fact, if $Y\sim 2H-2R$, $Y$ would be a degree 4 ($\deg(Y) = 6-b_1$) surface in $\Prj^5$ and
so would be a Veronese surface or a 2-dimensional rational scroll. The Veronese case would
give a $g^2_5$, an isomorphism of $Y$ to $\Prj^2$ mapping $\Cc$ isomorphically
to a non-singular degree 5 plane curve. In the scroll case, $Y$ would have to be a Hirzebruch
surface $X_0$, $X_2$ or $X_4$ embedded in $\Prj^5$ by the invertible sheaves with 
divisor class $H_1 = C_0+2f, C_0+3f$ or $C_0+4f$ respectively, in the notation of Chapter V, 
Section 2 of \cite{H77} ($S(2,2), S(3,1)$ or $S(4,0)$ in Schreyer's notation). $\Cc$ pulls
back to a smooth genus 6 curve which has intersection 10 with $H_1$. Thus, the divisor class of 
this pullback would be $3C_0+mf$ with $m$ equal to $4,7$ or $10$, and the ruling of $Y$ would 
induce a $g^1_3$ on $C$.
\medskip

Sections (6.4) and (6.5) of \cite{Sch86} give two possibilities for $Y$ depending upon whether 
the generic fibre over $\Prj^1$ is an irreducible conic or not.

In the general case (6.4), $Y$ is a (possibly singular) degree 5 Del Pezzo surface,
anticanonically embedded in $\Prj^5$. In (6.4), $k=1$ and $\delta=3$, so $Y$ is the Hirzebruch
surface $X_1$ blown up at 3 points and $X_1$ is itself $\Prj^2$ blown up at a point.

In case (6.5), $Y$ is a (singular) cone over a projective normal elliptic curve $E$ of a
hyperplane of $\Prj^5$.

These possibilities also follow from the classification of surfaces of degree $d$ in $\Prj^d$
in \cite{Nag60} and the fact that $Y$ is arithmetically Cohen-Macaulay, so linearly normal and
not a projection from $\Prj^6$ (the minimal free resolution of its defining ideal $I_Y$ is given below).
\medskip

Replacing coherent sheaves $\mathcal{F}$ on $\Prj^5$ by their corresponding maximal graded $R$-modules
($\mathcal{F} \leadsto \oplus^\infty_{n=-\infty}H^0(\Prj^5,\mathcal{F}(n))$, so 
$\mathcal{O}_{\Prj^5}(n)$ gives $R(n)$, $\mathcal{O}_\Cc$ gives $R/I_\Cc$ etc.), Schreyer shows that
Serre twists of the Buchsbaum-Eisenbud exact sequences (see Section 1, \cite{Sch86}),
$\Cbe^0$ and $\Cbe^1$
$$ \Cbe^0:\qquad R \leftarrow R(-2)^3 \leftarrow R(-3)^2 \leftarrow 0$$
$$ \Cbe^1:\qquad R^2 \leftarrow R(-1)^3 \leftarrow R(-3) \leftarrow 0$$
determine the minimal free resolutions (up to isomorphism) of the coordinate rings of $\Cc$, $Y$ and
$Z$ in the following way.

$\Cbe^0,\Cbe^0(-2),\Cbe^1(-2),\Cbe^1(-4)$ are respectively minimal free resolutions of the maximal graded
modules corresponding to the sheaves $\Osh_X,\Osh_X(-Y)$\footnote{This isn't quite correct
if $Y$ intersects the singular locus of $X$ and is non-Cartier. $\Osh_X(-Y)$ then actually
means the pushforward of the corresponding sheaf on the projective bundle of which $X$ is 
an image.},$\Osh_X(-Z),\Osh_X(R-4H)$.

The morphisms of the Koszul-complex resolution of $\Osh_\Cc$ by locally free $\Osh_X$ sheaves
$$ \Osh_X \leftarrow \Osh_X(-Y)\oplus\Osh_X(-Z) \leftarrow \Osh_X(R-4H)\leftarrow 0 $$
extends to morphisms between complexes of graded modules $\Cbe^1(-4) \stackrel{(f1,f2)}{\longrightarrow}
\Cbe^0(-2)\oplus\Cbe^1(-2)$ and $\Cbe^0(-2)\stackrel{g1}{\rightarrow}
\Cbe^0$, $\Cbe^1(-2)\stackrel{g2}{\rightarrow}\Cbe^0$ such that the mapping cones of the latter two
give minimal free resolutions of the coordinate rings of $Z$ and $Y$ respectively and the iterated
mapping cone of
$$\left[\Cbe^1(-4) \stackrel{(f1,f2)}{\longrightarrow}\Cbe^0(-2)\oplus\Cbe^1(-2)\right]
    \stackrel{g1+g2}{\longrightarrow}\Cbe^0$$
gives a minimal free resolution of $R/I_\Cc$.

Explicitly we get
$$\begin{diagram}[midshaft]
R(-7)&\rTo^{(\alpha_3,\beta_3)}&R(-5)^2\oplus R(-5)&\rTo^{\chi_3+\phi_3}&R(-3)^2\\
&&\dTo<{\delta_2\oplus \epsilon_2}&\ruTo_{\sigma_3}&\\
\dTo<{\rho_2}&&&&\dTo<{\gamma_2}\\
&\ruLine&&&\\
R(-5)^3&\rTo^{(\alpha_2,\beta_2)}&R(-4)^3\oplus R(-3)^3&\rTo^{\chi_2+\phi_2}&R(-2)^3\\
&&\dTo<{\delta_1\oplus \epsilon_1}&\ruTo_{\sigma_2}\\
\dTo<{\rho_1}&&&&\dTo<{\gamma_1}&\\
&\ruLine&&&\\
R(-4)^2&\rTo^{(\alpha_1,\beta_1)}&R(-2)\oplus R(-2)^2&\rTo^{\chi_1+\phi_1}&R\\
\end{diagram}$$
The columns are the twisted $\Cbe$ exact sequences, the middle one being the direct sum of
$\Cbe^0(-2)$ and $\Cbe^1(-2)$. $f_1,f_2,g_1,g_2$ are given by the $\alpha,\beta,\chi$ and $\phi$
maps. The $\sigma$ maps are those that arise from iterating the mapping cone. Apart from these
diagonal maps, the other maps in the diagram give commutative squares.
\medskip

\noindent\underline{Uniqueness of $Y$}
\medskip

A resolution of $R/I_Y$ is given by the mapping cone of the morphism between complexes defined by the
$\phi$ maps. $I_Y \subset I_\Cc$ is the sum of the images of $\phi_1$ and $\gamma_1$ in $R$ and
is resolved by the direct summand subcomplex arising from the $\Cbe^1(-2)$ and $\Cbe^0$ terms
(apart from $R$). Our chosen \res\ is isomorphic to this iterated mapping cone as a complex of
graded $R$-modules, but different choices of $X$ will give different decompositions of it.

However, the direct summand subresolution of $I_Y$ is {\it always the same} and so we get the same {\it 
unique} $Y$ for any $X$ (this was noted by Schreyer in \cite{Sch86}).

The point is that in the resolution of $R/I_Y$
\begin{equation}
R \stackrel{-\phi_1+\gamma_1}{\longleftarrow} R(-2)^3\oplus R(-2)^3 \stackrel{(\epsilon_1\oplus 0,
  \phi_2\oplus\gamma_2)}
 {\longleftarrow} R(-3)^3\oplus R(-3)^2 \stackrel{(\epsilon_2,-\phi_3)}{\longleftarrow} R(-5)
 \leftarrow 0 \label{Y_res}
\end{equation}
the weight -3 part must correspond precisely to the $R(-3)^5$ summand in the third term of \res\ 
and the -2 part must then correspond to the unique dimension 5 summand $F_5$ of the $R(-2)^6$ second term
that contains the image of $R(-3)^5$. $I_Y$ is generated by the 5 quadrics which are the images
of the generators of $F_5$ under $\psi$. $Z$, which varies with $X$, is the intersection of $X$ with
a single quadric hypersurface and $\Cc$ is the intersection of that hypersurface with $Y$.
\medskip

The computational approach taken here is to actually compute scrolls and this seems to be
fairly efficient, algorithmically. However, it may be possible to just work with $Y$, which
is very easy to determine from \res\ by computing $F_5$. When $Y$ is a cone over an elliptic curve $E$,
there are an infinite number of $g^1_4$ on $C$, given by projecting to $E$ (a degree 2 map on $\Cc$) and
then taking any degree 2 map to $\Prj^1$. We do use this projection when we recognise that we are in this
case. In the general case, $Y$ is a degree 5 Del Pezzo and an alternative is to try to parametrise $Y$
by plane cubics over an extension of the base field. The inverse parametrisation gives a birational map
of $Y$ to a degree 6 plane curve and we can then just project from a singular point. We will briefly
return to this later. In any case, $F_5$ is used in the algorithm for computing $X$.

\bigskip

\subsection{The Algorithm}
\bigskip

Our method consists of constructing scrolls $X$ by determining the graded direct summand subcomplexes
of \res\ that resolve their defining ideals. For this purpose, we want computational conditions that are 
as simple as possible for characterising these subcomplexes. One possible potential problem is that the
restriction of $\phi$ to the $R(-3)^5$ term in \res\ has a nontrivial kernel that must be avoided.
The next proposition, however, shows that we don't have to worry about this and that only the most
obvious restriction needs to be checked. This reduces the computation to a reasonably
straightforward Grassmannian-type algebra problem.
\medskip

\begin{proposition}\label{FG_prop}
If $F$ is any rank 3 direct summand of the $R(-2)^6$ term of \res\ and
$G$ is any rank 2 direct summand of the $R(-3)^5$ term such that $G$ maps into $F$ under $\phi$,
then $F \leftarrow G \leftarrow 0$ is a minimal free resolution of $I_X$, the image
of $F$ in $R$ under $\psi$, which is the (maximal) defining ideal of a degree 3, dimension 3 scroll
$X$ containing $\Cc$.
\end{proposition}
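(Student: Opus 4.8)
The plan is to read the two-term complex $0\to G\xrightarrow{\phi}F\xrightarrow{\psi}I_X\to0$ as a Hilbert--Burch complex and reduce the whole statement to a single genericity fact about $\phi|_G$. First I would record the easy structural data. Since \res\ is minimal, the images under $\psi$ of the six free generators of $R(-2)^6$ are a minimal, hence $k$-linearly independent, set of quadric generators of $I_\Cc$; restricting to the rank $3$ summand $F$ gives three linearly independent quadrics $q_1,q_2,q_3$, and $I_X=(q_1,q_2,q_3)\subseteq I_\Cc$ yields $\Cc\subseteq X$ at once. Writing $M$ for the $2\times 3$ matrix of linear forms representing $\phi|_G\colon G\to F$, the relation $\psi\phi=0$ shows the two rows of $M$ are syzygies of $(q_1,q_2,q_3)$, so we genuinely have a complex. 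By the Buchsbaum--Eisenbud acyclicity criterion (Hilbert--Burch, see \cite{Eis05}) this complex is exact, and then resolves $R/I_X$ minimally, provided only that (a) $\operatorname{rank}M=2$ and (b) $\operatorname{grade}I_2(M)\ge 2$; everything else will follow formally.

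The hard part, and the real content of the proposition, is (a): that $\phi|_G$ is injective, equivalently $G\cap\ker\phi=0$, equivalently that the $2\times 2$ minors of $M$ do not all vanish. Suppose they did, so $\operatorname{rank}M\le 1$ and the rows $\phi(g_1),\phi(g_2)$ are $\operatorname{Frac}(R)$-proportional, say $q\,\phi(g_2)=p\,\phi(g_1)$ with $p,q$ coprime. I would exploit two facts. First, from the shape of \res, the module $\ker\phi=\operatorname{im}(R(-5)^6\to R(-3)^5\oplus R(-4)^5)$ is generated in degree $5$, so it has no nonzero element of degree $<5$; since the generators $g_1,g_2$ of $G$ sit in degree $3$, this rules out a scalar proportionality (which would put the degree-$3$ element $\phi(g_2-cg_1)$ in $\ker\phi$) and rules out either row being zero. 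The only surviving possibility is $\deg p=\deg q=1$, and coprimality then forces a common \emph{constant} vector $a\in k^3\setminus\{0\}$ with $\phi(g_1)=q\,a$ and $\phi(g_2)=p\,a$. Applying $\psi$ and using $\psi\phi=0$ now gives $q\cdot\psi(a)=0$ in the domain $R$ with $q\ne0$, whence $\psi(a)=0$; but $\psi(a)=\sum_i a_iq_i$ is a nonzero quadric, a contradiction. Hence $\operatorname{rank}M=2$. I expect this to be the main obstacle, and I regard the small degree count combined with the application of $\psi$ as its crux.

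With (a) in hand the rest is routine. Rank $2$ makes the vector of signed maximal minors of $M$ nonzero; being another syzygy of $(q_1,q_2,q_3)$ it is $\operatorname{Frac}(R)$-proportional to $(q_1,q_2,q_3)$, and comparing degrees the factor is a scalar, so $I_X=I_2(M)$. For (b) I would check $\operatorname{grade}I_X\ge 2$ directly: a common factor of $q_1,q_2,q_3$ would be linear or quadratic, the quadratic case contradicting their independence and the linear case forcing the irreducible nondegenerate $\Cc$ into a hyperplane. The acyclicity criterion then makes $F\leftarrow G\leftarrow 0$ a resolution of $I_X$, minimal because $M$ has linear (non-unit) entries; thus $R/I_X$ is Cohen--Macaulay of codimension $2$, giving $\dim X=3$, $I_X$ saturated, and $\deg X=3$ from the Hilbert series read off the resolution. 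Finally, a $3$-fold of minimal degree $3$ in $\Prj^5$ is, by the classification of varieties of minimal degree, a rational normal scroll (possibly a cone), which identifies $X$ as the required degree $3$, dimension $3$ scroll containing $\Cc$.
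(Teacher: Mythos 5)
Your proposal follows the same Hilbert--Burch skeleton as the paper (three independent quadrics, signed minors proportional to $(Q_1,Q_2,Q_3)$ by a constant, $\gcd=1$ giving grade $\ge 2$, Hilbert series giving degree $3$ and dimension $3$), but you prove the crucial claim --- injectivity of $\phi|_G$ --- by a genuinely different and more self-contained route. The paper's proof invokes the structure theory of Section~\ref{sec_can_res}: it fixes an auxiliary scroll $X_0$ containing $\Cc$ (whose existence rests on Brill--Noether and Schreyer's analysis in \cite{Sch86}), identifies $\ker(\phi|_{R(-3)^5})$ as generated by a single degree-$5$ element $m$ whose first three entries are the $2\times 2$ minors of the matrix defining $X_0$ --- three linearly independent quadrics --- and argues that no nonzero multiple of such an element can sit inside a rank-$2$ constant-coefficient summand. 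You instead use only formal properties of \res: exactness (so $\ker\phi$ is the image of $R(-5)^6$ and vanishes in degrees $<5$, killing zero rows and scalar proportionality of the rows of $M$), minimality (independence of the quadrics $\psi(f_j)$), and $\psi\phi=0$ (which handles a degree-one proportionality factor via the constant vector $a$). This buys a proof needing no prior existence of a $g^1_4$ and none of (6.2)--(6.5) of \cite{Sch86}; what the paper's route buys is an explicit identification of the kernel, which it needs elsewhere anyway. Your argument is correct, and can even be streamlined: once coprimality and the linearity of the entries of $M$ force $\deg p=\deg q\le 1$, the element $p\,g_1-q\,g_2$ is a nonzero element of $\ker\phi$ of degree at most $4$, already contradicting generation in degree $5$, so the $\psi$-trick is not strictly needed. (One small caution: ``comparing degrees the factor is a scalar'' is not by itself sufficient, since a degree-zero element of the fraction field of $R$ need not be constant; this step needs coprimality together with $\gcd(Q_1,Q_2,Q_3)=1$, which you do establish in the following sentence.)

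The one genuine gap is your final step. The del Pezzo--Bertini classification of varieties of minimal degree applies to \emph{integral}, nondegenerate varieties, and nothing you have proved excludes a priori that the degree-$3$, dimension-$3$ arithmetically Cohen--Macaulay scheme $X$ is reducible or nonreduced (say a $3$-plane plus a quadric, or a multiple structure on a linear space). The paper avoids this issue entirely: it shows $M$ is $1$-generic --- every nonzero quadric in $\langle Q_1,Q_2,Q_3\rangle$ vanishes on the irreducible nondegenerate $\Cc$ and is therefore irreducible, and a failure of $1$-genericity would produce two reducible minors --- and then cites Theorems 6.4 and A2.64 of \cite{Eis05}, which give directly that $I_X$ is prime and $X$ is a rational normal scroll. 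Your route can be repaired in two lines: since $R/I_X$ is Cohen--Macaulay, $X$ is equidimensional with no embedded components; the component containing the irreducible nondegenerate curve $\Cc$ is itself nondegenerate of dimension $3$ in $\Prj^5$, hence of degree at least $3=\deg X$, so it is all of $X$ and occurs with multiplicity one, making $X$ integral. With that inserted, the appeal to the classification --- and the assertion that $I_X$ is the maximal (prime, saturated) defining ideal --- is justified; without it, the last sentence of your proof does not yet follow.
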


\begin{proof} The key result that is needed is the
\medskip

\noindent{\it Claim:} For any $F$ and $G$ as in the statement of the proposition, $G \rightarrow F$
is injective.
\medskip

We will prove this below. We now show that the proposition follows from the claim. 

Let $I_X$ denote the ideal image of $F$ under $\psi$. $I_X$ is generated by three quadric forms,
$Q_1$, $Q_2$, $Q_3$ which are $\bar{k}$-linearly independent in the space of all quadric (degree 2)
forms in $R$, where $\psi : F \rightarrow R$ is given by the $1\times 3$ matrix $[Q_1\ Q_2\ Q_3]$.

We want to show that the complex
$$ 0 \leftarrow R/I_X \leftarrow R \stackrel{\psi}{\longleftarrow} F \stackrel{\phi}{\longleftarrow} G \leftarrow 0 $$
is exact. $\phi$ here is given by a $2\times 3$ matrix $M$ which can be written as
$$ \left(\begin{array}{ccc}\lambda_1 & \lambda_2 & \lambda_3\\ \mu_1 & \mu_2 & \mu_3\end{array}\right)$$
where the $\lambda_i, \mu_i$ are linear forms in $R$.

Exactness follows from the Hilbert-Burch theorem (Thm. 20.15, \cite{Eis94}) once we show that there exists a
non-zero constant $a$ such that the three minors $Q'_i$ of $M$
($Q'_i$ is the minor obtained by leaving out the {\it i}th column of $M$) satisfy $Q'_i = (-1)^iaQ_i$,
$1 \le i \le 3$, and that the ideal $I_X$ (which they then generate) has depth $\ge 2$.

First note that each $Q_i$ is irreducible in $R\otimes_k\bar{k}$ since if one of them 
decomposed into a
product of 2 linear forms then $\Cc$ would lie in a hyperplane, which it doesn't. As $R$ is a UFD,
the principal ideal $(Q_1)$ is therefore prime and, as the $Q_i$ are $k$-linearly independent, $Q_2$ (or
$Q_3$) doesn't lie in $(Q_1)$ and $I_X$ has depth $\ge 2$.
\medskip

Now, if $E$ denotes the field of fractions of $R$, then $M$ has rank 2 over $E$, as $\phi$ is injective
on $G$,
so the $Q'_i$ are not all zero and the right kernel of $M$ in $E^3$ is generated by the column vector
${\bf v'} := (Q'_1, -Q'_2, Q'_3)^t$. From our complex, ${\bf v} := (Q_1, Q_2, Q_3)^t$ lies in the $R^3$-kernel.

The $Q'_i$ are quadrics. Let $b$ be a GCD of them in $R$ and write $U_i$ for $Q'_i/b$. Then the right
$R^3$-kernel of $M$ is free of rank 1 over $R$ generated by ${\bf w} := (U_1, U_2, U_3)^t$. Let the degree of $b$
be $r$. Then ${\bf v}$ is equal to $a{\bf w}$ with a non-zero homogeneous polynomial $a$ of degree
$r$. This implies that $r = 0$ and that $a,b$ are constants, since GCD~$(Q_1, Q_2, Q_3) = 1$. So
${\bf v'} = (a/b){\bf v}$, as required.
\medskip

From the minimal free $R$-resolution of $S := R/I_X$, it is immediate that $S$ has Hilbert series 
$(1+2t)/(1-t)^4$ and Hilbert polynomial $H(n) = (1/2)(n+2)(n+1)^2$, so the
closed subscheme $X$ of $\Prj^5$ corresponding to $I_X$ has degree 3 and dimension 3.

We claim that $M$ is 1-generic (over $\bar{k}$), as defined in Chapter 6 of \cite{Eis05}. Any non-zero 
quadric in the $\bar{k}$-linear span $\langle Q_1, Q_2, Q_3\rangle$ of its minors is
irreducible over $\bar{k}$, since no irreducible quadric vanishes on $\Cc$ as noted above.
If $M$ were not 1-generic, we could find invertible 2x2 and 3x3 matrices $U,V$ with
coefficients in $\bar{k}$ giving $UMV$ a zero in the top left position. But then at least two
of the minors of $UMV$ would be reducible and they are non-zero linear combinations of the 
linearly independent $Q_i$.

Then, Theorems 6.4 and A2.64 of \cite{Eis05} show that $X$ is a rational scroll.
\bigskip

\noindent{\it Proof of claim} 

It remains to prove that $G \rightarrow F$ must be injective. This follows from the more 
detailed description of \res\ given in Section \ref{sec_can_res} after choosing a particular 
rational scroll $X_0$ containing $\Cc$.

The results there show that the restriction of $\phi$ to the $R(-3)^5$ summand does indeed
have a nontrivial kernel which is the image of the map $(\epsilon_2,-\phi_3)$
in (\ref{Y_res}). It is generated by a single element $m$. It suffices to show that a
non-zero multiple of $m$ cannot lie in  a direct summand of $R(-3)^5$ of rank less than 3.

Take an $R$-basis for $R(-3)^5$ such that the first three elements are a basis for the $R(-3)^3$ summand
of the $R(-3)^3\oplus R(-3)^2$ direct sum decomposition appearing in (\ref{Y_res}). Then $m$ is given by
a 5-vector with quadric entries, the first three of which are the image of the generator of $R(-5)$ under 
$\epsilon_2$.

$\epsilon_2$ is the last map of the twisted Buchsbaum-Eisenbud complex $\Cbe^1(-2)$.
The explicit description of these complexes (see (1.5) \cite{Sch86}, for example) shows that
the first three entries in the vector for $m$ can be taken as the 2x2 minors of the 2x3 matrix
which define $X_0$. These three quadrics are linearly independent in the space of all
quadrics on $\Prj^5$. However, if $am$ lay in a direct summand of $R(-3)^5$ of rank 2
 for a non-zero homogeneous polynomial $a$, the vector for $am$ would be equal to 
$a_1\bf{v}_1+a_2\bf{v}_2$ for homogenous polynomials $a_1,a_2$ and vectors $\bf{v}_1,\bf{v}_2$ 
with entries in $k$ which generate the summand. Then we would arrive at
the contradiction that the five entries of the $am$ vector lay in the space of polynomials
spanned by $a_1$ and $a_2$, which has dimension at most two. 
\end{proof}
\bigskip

A rank $f$ direct summand of $R(e)^d$ is generated by $f$ linearly-independent $d$-vectors 
with coefficients in the base field $k$. The idea is to simply represent $F$ and $G$ as
summands generated by the rows of variable matrices. The condition that $\phi$ maps $G$ into
$F$ is then easily translatable into a system of (degree 1 and 2) polynomial equations in
these variables. The solutions of these over $\bar{k}$ will give us precisely the scrolls $X$
that we are interested in.

Before doing this however, we compute the rank 5 direct summand $F_5$ of $R(-2)^6$ that 
$R(-3)^5$ maps into under $\phi$ and the surface $Y$. These are used in two ways.
\begin{itemize}
\item When $Y$ is an elliptic cone, we will use projection to the genus one curve to construct
$g^1_4$s.
\item $F$ must lie in $F_5$ and it reduces the number of variables in the problem to consider
$F$ as a direct summand of $F_5$ rather than $R(-2)^6$. 
\end{itemize}
We will prove in Section~\ref{pln_mods} that then there are only finitely many $g^1_4$s on
$C$ when $Y$ is not an elliptic cone. Therefore our system of polynomial equations for the scrolls
will be zero-dimensional in that case.
\bigskip

\subsubsection{Computing $F_5$ and $Y$}
These come from basic linear algebra. Let $M_\phi$ be the 5x6 matrix of linear forms giving the
map $\phi$ restricted to $R(-3)^5$. $F_5$ is generated by 5 linearly independent 6-vectors ${\bf v}_i$
with coefficients in $k$ such that each row of $M_\phi$ is a $R$-linear combination of the ${\bf v}_i$.
The one-dimensional right $k$-kernel of the 5x6 matrix with rows ${\bf v}_i$ is precisely
the one-dimensional right $k$-kernel of $M_\phi$. This kernel has to exist and be one-dimensional for
the image of $\phi$ to lie in a direct summand of rank 5 of $R(-2)^6$ but to lie in no smaller
rank direct summand. $Y$ is defined by the 5 quadrics that are the images of the ${\bf v}_i$ under
$\psi$. Thus, we have the following simple algorithm, Algorithm 1, that also gives the matrix $M_5$
for the restricted map $\phi$ from $R(-3)^5$ into $F_5$.

\begin{algorithm}
\caption{Computation of $F_5$, $Y$ and $M_5$}

\begin{description}
\item[Step 0] Input $M_\phi$.
\item[Step 1] Let $M_1$ be the 30x6 $k$-matrix obtained from $M_\phi$ by replacing each linear form
by a 6-element column vector containing its coefficients with respect to the variables of $R$.
Compute $\bf{v}$, a basis for the right kernel of $M_1$.
\item[Step 2] Compute an echelonised basis $B$ for the 5-dimensional $k$-subspace of $k^6$
orthogonal (with respect to the usual scalar product) to $\bf{v}$. $F_5$ is the direct summand
of $R(-2)^6$ with $R$-basis given by the vectors in $B$. Set $I_Y$ equal to the ideal of $R$ generated
by the five quadrics $\psi(b)$ for $b \in B$. $I_Y$ is the defining ideal of $Y$.
\item[Step 3] Let $i$ be a complementary index for the echelonising of basis $B$. That is, if we
remove column $i$ from the 5x6 matrix whose rows are the elements of $B$, we get the identity matrix.
Then, set $M_5$ equal to the 5x5 matrix given by removing the $i$th column of $M_\phi$.
\end{description}

\end{algorithm}
\bigskip

\subsubsection{The elliptic cone case}
\label{ell_cone_case}
Algorithm 2 determines whether $Y$ is a cone over a projectively normal genus 1 curve
$E$ and returns $E$ and the degree 2 projection from $\Cc$ to $E$ in the affirmative case.

Computing the singular locus $S$ of $Y$ is a standard procedure. We apply the Jacobi criterion:
$S$ being defined by the 3x3 minors of the 5x6 Jacobi matrix of partial derivatives of the 5
quadric generators of $I_Y$. A Gr\"obner basis computation tells us whether $S$ is zero-dimensional and
consists of one point or not. Note that the singular locus coincides with the locus of non-smoothness
over $k$ for the possible $Y$ here.

To find a degree 4 map of $\Cc$ to $\Prj^1$ over $\bar{k}$, it remains to compute a degree 2 rational 
function on $E$. One way to do this computationally is to choose any degree 2 effective divisor $D$ of
$E$ over $\bar{k}$ (e.g., twice a $\bar{k}$ point) and compute the Riemann-Roch space $L(D)$ using Florian
Hess' function field package or something similar. Any non-constant function in $L(D)$ is a degree 2
function on $E$. Whether a degree 2 $k$-rational function exists or not on $E$ is trickier.
$E$ is a principal homogeneous space over its elliptic curve Jacobian $J(E)$ of order 1 or 5 in the
Tate-Shaferevich group of $J(E)$ over $k$. If it is of order 5, then the only $k$-rational divisors
on $E$ have degree divisible by 5 and we must go to at an extension of degree at least 5 over $k$ to
find effective divisors of degree 2. If it is of order 1, then $E$ is isomorphic to $J(E)$ over $k$
and there is at least one $k$-rational point. Over $\Q$, there is no known effective procedure for
determining whether a $k$-rational point exists, though in practise, if there are $k$-rational points,
we can often find one by a point search over small height projective points.
\medskip

\noindent{\it Remarks:} 1) It is easy to see that any $g^1_4$ on $\Cc$ is the pullback of a 
$g^1_2$ on $E$. In fact, (6.4) and (6.5) of \cite{Sch86} show that the rulings on a scroll $X$
corresponding to the $g^1_4$ intersect the cone $Y$ in the union of two lines.

2) $E$ is the unique genus 1 curve up to $k$-isomorphism which has a degree 2 covering by
$C$ over $k$. If $E_1$ were another such, consider the product map of $C$ into $E\times E_1$
and let $C_1$ be its image. $C \rightarrow C_1$ is of degree 1 or 2. If it were degree 1, then
$C_1$ would be birationally equivalent to $C$ and an irreducible divisor of $E\times E_1$
of arithmetic genus $\ge 6$ with a degree 2 projection to $E$ and $E_1$. Divisor theory on
$E\times E_1$ shows that this is impossible. Thus $C \rightarrow C_1$ is of degree 2 and
$C_1$ projects $k$-isomorphically onto both $E$ and $E_1$.

\begin{algorithm}
\caption{Testing for the elliptic cone case}

\begin{description}
\item[Step 0] Input $I_Y$.
\item[Step 1] Compute the singular locus $S$ of $Y$. If $S$ is empty or contains more
than one point, return false. Otherwise, let $P$ be the unique ($k$-rational) point in $S$.
\item[Step 2] Find a linear change of coordinates of $\Prj_5$ such that $P$ becomes the point
$(0:0:0:0:0:1)$. If $x_1,\ldots,x_6$ are the new homogeneous coordinates functions, set $B$ equal to
the set of 5 quadrics given by expressing the 5 generators of $I_Y$ in the $x_i$.
\item[Step 3] If $B$ contains an element that is not a quadric in $x_1,\ldots,x_5$ only, return false.
Otherwise, identifying $\Prj^4$ with the hyperplane of $\Prj^5$ defined by $x_6=0$, set $E$ equal to the 
subvariety of $\Prj^4$ with defining ideal generated by $B$ and {\tt prj} equal to the map from $\Cc$
to $E$ given by composing the linear transformation from Step 2 with the projection from $\Prj^5$ to
$\Prj^4$. Return true,$E$,{\tt prj}. 
\end{description}

\end{algorithm}

\bigskip

\subsubsection{The general case}
We now consider the general case when $Y$ is a Del Pezzo. In this case, as mentioned earlier, there are
only finitely many $g^1_4$s and scrolls $X$ (at most 5, in fact). We represent direct summands 
$F$ of $F_5$ and $G$ of $R(-3)^5$ by rows of variable matrices giving their generators and translate
the condition that $\phi$ maps $G$ into $F$ into a system of polynomial equations in the variables. 

We can work projectively but it is computationally simpler
(and involves fewer variables) to work on affine patches of the Grassmannian representations
of the $F$, $G$ summands. For example, one case is where the first minor of each of the matrices
giving the generators of $F$ and $G$ is non-zero. Then we can choose unique linear combinations
of the generators of the two submodules such that $F$ (resp. $G$) is generated by the rows of 
the matrix
$$ M_F = \left(\begin{array}{ccccc}1 & 0 & 0 & u_1 & u_2\\0 & 1 & 0 & v_1 & v_2\\
0 & 0 & 1 & w_1 & w_2\end{array}\right) $$
resp.
$$ M_G = \left(\begin{array}{ccccc}1 & 0 & r_1 & r_2 & r_3\\0 & 1 & s_1 & s_2 & s_3
    \end{array}\right) $$
where $u_i,v_i,w_i,r_i,s_i$ will lie in $\bar{k}$. Considering these as 12 independent
variables, the condition that $G$ maps into $F$ produces a zero-dimensional ideal of relations
$J$ in $k[u_i,v_i,w_i,r_i,s_i]$. The procedure
for computing solutions when the generators of $F$ and $G$ can be put into the 
above form  is given explicitly in Algorithm 3 below. The input is the matrix 5x5 matrix of linear forms 
$M_5$ that represents $\phi: R(-3)^5 \rightarrow F_5$ and was computed earlier in Algorithm 1.

For each set of return values $F_{sol},G_{sol},T_{sol}$ of Algorithm 3, $R$-bases for $F$ as a submodule 
of $F_5$ (resp. $G$ as a submodule of $R(-3)^5$) are given by the rows of $F_{sol}$ (resp. $G_{sol}$)
and the 2x3 matrix representing $\phi:G \rightarrow F$ with respect to these bases is $T_{sol}$.

The corresponding scroll $X$ is defined by the three quadrics that are the images in $R$ of the basis of
$F$ under the inclusion map of $F_5$ into $R(-2)^6$ followed by $\psi$, but equivalently it is
defined by the three quadrics which are the 2x2 minors of $T_{sol}$ (see proof of Prop.~\ref{FG_prop}).

$T_{sol}$ is a 1-generic matrix of homogeneous linear forms in $x_i$
$$\left(\begin{array}{ccc} L_0\ & L_2\ & L_4\\ L_1\ & L_3\ & L_5\end{array}
    \right) $$
As $X$ is defined by the maximal minors of $T_{sol}$, the rational function $L_0/L_1 = L_2/L_3 =
L_4/L_5$ on $X$ gives a rational map to $\Prj^1$ which induces the linear pencil of the ruling.
Since the associated $g^1_4$ on $\Cc$ is the restriction of this ruling, we can therefore read
off the desired degree 4 rational function directly from $T_{sol}$:
\smallskip

\fbox{\begin{minipage}[t]{15cm}\begin{center}
 A degree 4 rational function on $\Cc$ that induces the $g^1_4$  corresponding to $X$\\
  is just $L_0/L_1$  (or $L_2/L_3$, $L_4/L_5$) restricted to $\Cc$\end{center}\end{minipage}}
\begin{algorithm}
\caption{Computation of scrolls $X$ for the first pair of Grassmannian affine patches}

\begin{description}
\item[Step 0] Input $M_5$.
\item[Step 1] Compute $M_G*M_5$. The result is of the form
 $$ \left(\begin{array}{ccccc}\mu_1 & \mu_2 & \mu_3 & \mu_4 & \mu_5\\
  \nu_1 & \nu_2 & \nu_3 & \nu_4 & \nu_5\end{array}\right) $$
  The $\mu_i$ and $\nu_i$ are homogeneous linear forms in the $x_i$ with coefficients
  non-homogeneous linear forms in $r_i$ (for the $\mu_i$) and $s_i$ (for the $\nu_i$).
  The rows of this matrix generate the image of $G$ under $\phi$.
\item[Step 2] The condition that $\phi$ maps $G$ into $F$ translates into 24 equations of degree
  $\le 2$ in the 12 variables given by taking the 4 equations
  $$\mu_4=u_1\mu_1+v_1\mu_2+w_1\mu_3\quad \mu_5=u_2\mu_1+v_2\mu_2+w_2\mu_3$$
  $$\nu_4=u_1\nu_1+v_1\nu_2+w_1\nu_3\quad \nu_5=u_2\nu_1+v_2\nu_2+w_2\nu_3$$
  between linear forms in $x_1,\ldots,x_6$ and equating the coefficients of each $x_i$
  on the LHS and RHS. Let $J$ denote the ideal generated by the 24 equations.
\item[Step 3] Compute a lex Gr\"obner basis of the zero-dimensional ideal $J$ .
  From this, we can read off the solutions in $\bar{k}$ for the $u_i,\ldots,s_i$ to the
  system of 24 equations. 
  For each solution, set $F_{sol}$ and $G_{sol}$ equal to $M_F$ and $M_G$ evaluated at the
  $u_i,\ldots,s_i$ values and set $T_{sol}$ equal to the 2x3 matrix of linear forms in
  $\bar{k}[x_1,\ldots,x_6]$ given by evaluating
  $$\left(\begin{array}{ccc} \mu_1\ & \mu_2\ & \mu_3\\ \nu_1\ & \nu_2\ & \nu_3\end{array}
    \right) $$
  at the $r_i$, $s_i$ values. Return $F_{sol},G_{sol},T_{sol}$.  
\end{description}

\end{algorithm}
\medskip

We have shown how to compute scrolls $X$ and the associated $g^1_4$s and rational maps
that come from $F$ and $G$ summands with a particular affine Grassmannian representation.
The same procedure works in the other cases (e.g., $M_G = 
\left(\begin{array}{ccccc}1 & r_1 & r_2 & 0 & r_3\\0 & s_1 & s_2 & 1 & s_3\end{array}\right)$).
There are $\binom{5}{2}*\binom{5}{3}=100$ possibilities in total, though many will lead to the
same $F,G$ pairs in general.

It follows from \cite{KL74} that a gonality 4, genus 6 curve with only finitely many $g_1^4$s 
has at most 5 $g^1_4$s. We will also prove this in Prop. ~\ref{pln_prop_1},
Section~\ref{pln_mods}. This means that the system of equations that we have to solve
in any particular case give a zero-dimensional ideal $J$ of degree at most 5 generated by
polynomials of degree at most 2. Thus,computing a
lex Gr\"obner of $J$ and even decomposing into prime components is generally quite fast even though
there are 12 variables.

We can adopt the following overall procedure. If we are only interested in finding a degree 4 function
over $\bar{k}$, we can search for solutions over affine patches until we find one and then stop.
If we would like a function over $k$ (if one exists), we can proceed through solutions over different
patches, stopping if we find a $k$-rational one or if we have already found five. If we want to find
all solutions, we can keep searching through patches, stopping if we have found five solutions at any
point. In practise, we compute the prime components of the ideals $J$ while searching and only specialise
into some field extension of $k$ when we have chosen our desired solution.
\bigskip

\subsubsection{Degree six plane models}
We will show in Section~\ref{pln_mods} that $C$ is not birationally equivalent to a degree 6
plane curve in the special case where $Y$ is an elliptic cone.

In the general case, on the other hand, a nice feature of the above computation of scrolls
is that a degree 6 singular plane model for $C$ is also easily constructed from the
$T_{sol}$ matrix.

Note that if $|D|$ is a $g^1_4$ then $|K-D|$ is a $g^2_6$ and vice-versa by Riemann-Roch.
Explicitly, $L0$ and $L1$ in $T_{sol}$ are two hyperplanes that intersect $\Cc$ in degree 10
divisors with a degree
6 common factor $K-D$ where $D$ is the divisor of points lying on $L0$ but not $L1$.
The sections of $|K-D|$ as subsections of $|K|$ correspond to the space of hyperplanes through $D$.
This space is precisely the degree 1 part of the saturation ideal
$(I_\Cc+\langle L0 \rangle : \langle L1 \rangle^\infty)$. Standard Gr\"obner basis
algorithms to compute saturation ideals are well-known (see \cite{PfGr02}).
A 3-element basis of the space defines the map from $\Cc$ to $\Prj^2$ which is
birational onto a degree 6 image $C_1$.

The fact that the map corresponding to $|K-D|$ is birational follows easily from the proof of
Prop.~\ref{pln_prop_1} in Section~\ref{pln_mods}, where the $g^1_4$s on $\Cc$ are identified
with the restrictions of particular linear pencils on the Del Pezzo $Y$. Furthermore, we can
also deduce from the analysis there that $|D|$ corresponds to the pencil of conics in $\Prj^2$
that pass through the 4 singular points of $C_1$ when $C_1$ only has nodes and simple cusps as 
singularities.

When $Y$ is an elliptic cone over genus one curve $E$, the $|K-D|$ for $g^1_4$s $D$ must give
degree 2 maps onto nonsingular degree 3 plane curves (if the image were singular or the map
3-1 onto a plane conic then $C$ would have gonality at most 3). These images must be isomorphic
to $E$ after Remark 2 of \ref{ell_cone_case}.
\bigskip

\subsubsection{Example: $X_0(58)$}
\bigskip

As an example, we take the case of the genus 6 modular curve $X_0(58)$.

With projective coordinate variables $x,y,z,u,v,w$ a canonical model has defining
equations given by the following polynomials

\[\begin{array}{l}
x^2 - xz + yu + yv - xw + uw + vw,\\
x^2 - xy - y^2 - xz - xu + zu + yv + zv,\\
-x^2 + xy + y^2 + u^2 + uv - yw,\\
-x^2 - xy + y^2 + yz - xu + yu + zu + u^2 - vw,\\
y^2 - xz + yz + z^2 - xu + zu + yw + zw,\\
xy - yz - xu + zu + yv + zv + uv - xw + yw + zw + uw\\
\end{array}\]

\noindent This can be derived by computing relations between the $q$-expansions of a
basis of weight 2 cusp forms. The canonical minimal resolution is of general type.
\medskip

Working with \magma\ (\cite{Mag97}) on a 2.2 GHz dual core AMD Opteron machine, it took less than a second
in total to compute the Gr\"obner basis and degree of each of the $J$s corresponding to
different Grassmannian affine patches. As expected there were 5 points in total.
Decomposing a suitable $J$, we found that there was 1 rational solution and 2 pairs of
conjugate solutions over quadratic fields.
\smallskip

Taking the rational solution, it took a fraction of a second to compute that the 
$g^1_4$ is determined by the linear pencil generated by the hyperplanes
$$ 2y - v + w \qquad \mbox{and}\qquad 2x + 2z - v + w $$
and that the complementary $g^2_6$ is determined by the 3 hyperplanes
$$ y + u + v \qquad z - v + w \qquad x - u - v $$
which gives a degree 6 plane model with 4 nodes. The affine version of this has defining
polynomial
$$ \begin{array}{c}Y^5-Y^4X^2-2Y^4X-10Y^4+Y^3X^3+11Y^3X^2+9Y^3X+33Y^3-
    Y^2X^4-2Y^2X^3-\\36Y^2X^2-10Y^2X-37Y^2+3YX^5+22YX^4+
    12YX^3+36YX^2+6YX+17Y-\\2X^6+2X^5-3X^4+4X^3-3X^2+2X-2=0\end{array} $$
where $X$ and $Y$ are the rational functions $(y+v+w)/(x-u-v)$ and $(z-v+w)/(x-u-v)$
on $\Cc$. 
\smallskip

We will now derive radical expressions for $X$ and $Y$ using the degree 4 rational
function on this affine plane curve that comes from the $g^1_4$ on $\Cc$.

As noted above, the $g^1_4$ on the projective plane model is given by the pencil of
quadrics through the 4 nodes. This was simple to compute in \magma. The result is
that the degree 4 function $t$ is given by
$$ t = (3X^2 + Y^2 - 6Y + 3)/(XY - Y^2 + 3X + 4Y) $$

Eliminating $Y$ by a resultant computation, we find that the function field is generated
by $t$ and $X$ and that $X$ satisfies the degree 4 relation over $\Q(t)$
$$ \begin{array}{c} (8t^3+12t^2+8t+4)X^4-(4t^5+6t^4-12t^3-
32t^2-22t-12)X^3-(12t^5+17t^4-4t^3-40t^2-\\
27t-14)X^2-(13t^5+9t^4-9t^3-48t^2-34t-12)X-4t^5
+3t^4+13t^3+32t^2+25t+10 = 0\end{array} $$
A further Gr\"obner basis computation shows that
$$ Y = (2t^2-3t+3)X^2+(t^2-6t)X+2t^2-3t+3)/((2t^2+5t+3)X+5t^2+5t+9) $$

It remains to use the formulae for the roots of a degree 4 polynomial to give a radical expression
for $X$ in $t$. This is rather messy but we include it for completeness. Let $\zeta$ be a primitive
cube root of unity. Define
\begin{eqnarray*} P_1 & = & (3t^5+10t^4+31t^3+40t^2+27t+14)
(72t^{10}+42t^9+475t^8+662t^7+1770t^6+\\
&&2420t^5+2654t^4+2092t^3+
1093t^2+324t+52)\\
P_2 & = & (t^2+2)(16t^4+26t^3+79t^2+42t+45)(t^6+2t^5+11t^4+22t^3+21t^2+12t+4)\\
P_3 & = & 3(7t^6-2t^5-7t^4-14t^3-17t^2-12t-4)\\
A & = & 12t^5+17t^4-4t^3-40t^2-27t-14\\
B & = & 12t^{10}-66t^9-223t^8-926t^7-1974t^6-3332t^5-3854t^4-3340t^3-\\
&&2101t^2-900t-244\\
C & = & t^5+(3/2)t^4-3t^3-8t^2-(11/2)t-3\\ 
D & = & (t+1)(2t^2+t+1)
\end{eqnarray*}
and let $R_1$ be the radical expression
$$ R_1 = \sqrt[3]{-P_1+3P_2\sqrt{P_3}} $$
and
\begin{eqnarray*}
a_1 & = & (-2A +(R_1-(B/R_1)))/(3D)\\
a_2 & = & (-2A -(1/2)((R_1-(B/R_1))+\zeta(R_1+(B/R_1))))/(3D)\\
a_3 & = & (-2A -(1/2)((R_1-(B/R_1))-\zeta(R_1+(B/R_1))))/(3D)
\end{eqnarray*}
Then, we get the following radical expression for $X$
$$ X = (1/4)((C/D)+\sqrt{(C/D)^2-a_1}+\sqrt{(C/D)^2-a_2}+\sqrt{(C/D)^2-a_3})$$
where the third square root should actually be written as a certain rational function in
$t$ divided by the product of the other two square roots.
\bigskip
 
\subsection{Plane Models of Genus 6 Curves}
\label{pln_mods}
\bigskip

In this section, we prove some results about which types of genus 6 curve have degree 6 singular plane 
models and relate the existence of such a model to the finiteness of the number of distinct $g^1_4$s in
the gonality 4 case. As well as having a bearing on our algorithm, these results also give some
interesting information about the stratification of the moduli space of genus 6 curves into
the following parts: hyperelliptic, gonality 3, plane quintic, Clifford-index 2 with $Y$
an elliptic cone and Clifford-index 2 with $Y$ a Del Pezzo. $Y$ refers to the surface described in
Section~\ref{sec_can_res} that contains the canonical curve.

In order to discuss double points more easily, we assume that the characteristic of $k$ is not 2.

\begin{proposition}
\label{pln_prop_1}
For any genus 6 curve $C$ the following are equivalent
\begin{enumerate}
\item[(a)] $C$ is of gonality 4 with no $g^2_5$ and $Y$ is a Del Pezzo surface.
\item[(b)] $C$ has only finitely many $g^1_4$s.
\item[(c)] $C$ is birational over $\bar{k}$ to a degree 6 plane curve with only double points.
\end{enumerate}
In this case, $C$ has at most 5 $g^1_4$s.
\end{proposition}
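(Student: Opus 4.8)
The plan is to prove the three-way equivalence by establishing a cycle of implications, relying heavily on the structure theory of genus 6 curves via the surface $Y$ and on Brill–Noether theory. The central geometric object is the degree 5 Del Pezzo surface $Y$ (anticanonically embedded in $\Prj^5$, isomorphic to $\Prj^2$ blown up at 4 points) and the identification of $g^1_4$s with certain pencils on $Y$.

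First I would handle (a) $\Rightarrow$ (c) and the bound of 5. Assuming $Y$ is a Del Pezzo, I would exploit the fact that $Y$ is $\Prj^2$ blown up at four points $p_1,\dots,p_4$, so that $\operatorname{Pic}(Y)$ is generated by the hyperplane class $\ell$ and the exceptional divisors $e_1,\dots,e_4$ with $\ell^2 = 1$, $e_i^2 = -1$. The canonical curve $\Cc$ sits on $Y$ as a divisor in the class $-2K_Y$ (a consequence of the complete intersection description $\Cc \sim 2H$ restricted to $Y$, where $H = -K_Y$ is the anticanonical/hyperplane class). The $g^1_4$s correspond to the pencils of conics through three of the four blown-up points — equivalently, to the pencils $|\ell - e_i - e_j - e_k|$ — which restrict to pencils of degree 4 on $\Cc$. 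Counting these gives exactly $\binom{4}{3} = 4$ such pencils, plus the single pencil $|2\ell - e_1 - e_2 - e_3 - e_4|$ of conics through all four points, yielding at most 5. Contracting one such conic pencil (blowing down to realize $Y$ as a different $\Prj^2$) maps $\Cc$ birationally to a plane sextic whose singularities are the four double points coming from the blown-up locus; this simultaneously gives (c). The projection-from-$L$ computation sketched in the $T_{sol}$ discussion confirms birationality and the node/cusp structure.

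Next I would prove (c) $\Rightarrow$ (b). If $C$ is birational to a plane sextic $C_1$ with only double points, then by the genus formula a genus 6 curve forces exactly four double points (since $\binom{6-1}{2} - \sum \delta_P = 6$ gives $\sum\delta_P = 4$, and double points contribute $\delta_P = 1$ each). Every $g^1_4$ arises as a pencil of conics through three of the four double points — or as the pencil of conics through all four — and there are only finitely many such configurations, giving finiteness directly. Here I would invoke Riemann–Roch / the correspondence $D \leftrightarrow K-D$ to match the $g^1_4$s with the $g^2_6$ plane-curve data. Finally, for (b) $\Rightarrow$ (a), I would argue the contrapositive: the only genus 6 curves with gonality 4 and no $g^2_5$ are those with $Y$ either a Del Pezzo or an elliptic cone (the classification recalled in Section~\ref{sec_can_res}), and in the elliptic cone case Remark 1 of \ref{ell_cone_case} shows every $g^1_4$ pulls back from a $g^1_2$ on the genus one curve $E$, of which there is a one-parameter family — hence infinitely many. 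The hyperelliptic, trigonal, and plane-quintic ($g^2_5$) cases likewise carry infinite families of $g^1_4$s (obtained by adding base points or composing the lower gonal map with pencils), so finiteness forces exactly case (a).

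The step I expect to be the main obstacle is the careful bookkeeping in (a) $\Rightarrow$ (b,c): proving that the restriction map from pencils on $Y$ to linear systems on $\Cc$ is a bijection onto the set of all $g^1_4$s, and that no further $g^1_4$s exist beyond those coming from $Y$. This requires showing that every $g^1_4$ on $\Cc$ determines a scroll $X$ containing $\Cc$ (Lemma, already available) and that the associated surface $Y$ — which is \emph{unique} and independent of $X$ (the uniqueness of $Y$ established in Section~\ref{sec_can_res}) — forces the ruling of $X$ to cut $Y$ in a union of two lines, pinning the $g^1_4$ to one of the finitely many conic pencils. The uniqueness of $Y$ is exactly what makes the count rigid, so I would lean on it to rule out stray $g^1_4$s not visible on $Y$, and then a direct Picard-lattice computation on the Del Pezzo completes the enumeration and the bound of 5.
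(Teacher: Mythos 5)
Your overall skeleton matches the paper's --- the same cycle (a)$\Rightarrow$(c)$\Rightarrow$(b)$\Rightarrow$(a), with the contrapositive argument for (b)$\Rightarrow$(a) and a Picard-lattice count on the Del Pezzo --- but the core enumeration is wrong. Writing $\ell, e_1,\dots,e_4$ for the generators of $\operatorname{Pic}(Y_1)$, the canonical curve sits in the class $-2K_{Y_1} = 6\ell - 2(e_1+e_2+e_3+e_4)$, so a pencil $D_1$ can cut a $g^1_4$ only if $D_1\cdot(-2K_{Y_1}) = 4$ and (after removing fixed components) $D_1^2 = 0$. Your classes fail this: $(\ell - e_i - e_j - e_k)\cdot(-2K_{Y_1}) = 0$ (and lines through three general points form an empty system, not a pencil of conics, so your two descriptions of these classes are also inconsistent with each other), while genuine conics through three of the points, $2\ell - e_i - e_j - e_k$, restrict to degree $6$ --- those are the $g^2_6$s giving the plane models, not the $g^1_4$s. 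The correct list, as the paper's proof and its Remark 2 show, is the four pencils of \emph{lines through a single point}, $\ell - e_i$, together with the pencil of conics through all four points, $2\ell - e_1-e_2-e_3-e_4$. Your count of $5$ comes out only by coincidence, and the degree bookkeeping underlying both your (a)$\Rightarrow$(c) and (c)$\Rightarrow$(b) steps collapses.

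Beyond this, the step you yourself flag as ``the main obstacle'' --- that \emph{every} $g^1_4$ on $C$ is the restriction of one of these finitely many pencils --- is the actual content of the proposition and remains unproven in your sketch. The paper's (c)$\Rightarrow$(b) does the work: a complete base-point-free $g^1_4$ yields a scroll $X$; the ruling restricts to a pencil on the surface $Y$, which is identified with the anticanonical image $Y_0$ of the blow-up $Y_1$ by a cycle-class computation in the bundle $\Prj(\mathcal{E})$ resolving $X$ (showing $Y_0 \sim 2H'-R$, then $Y = Y_0$ by intersecting with $\Cc$); pulling back to $Y_1$ and discarding fixed components consisting of $(-2)$-curves produces a class with $D_1^2=0$ and $D_1\cdot K_{Y_1} = -2$, at which point the lattice computation closes the argument. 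Your appeal to ``the ruling of $X$ cuts $Y$ in a union of two lines'' is the elliptic-cone statement (Remark 1 of Section~\ref{ell_cone_case}), not what happens over a Del Pezzo. Finally, your genus-formula claim that (c) forces ``exactly four double points'' is false when higher $A_n$ singularities occur (a single $A_7$ point already has $\delta = 4$): what is forced is exactly four blow-ups, possibly at infinitely near points, and this is precisely where the degenerate Del Pezzos, fixed $(-2)$-curves, and singular scrolls that the paper must handle enter the proof.
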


\begin{proof}
\noindent (b)$\Rightarrow$(a)

\noindent We show that there are infinitely many $g^1_4$s in every other case.

Firstly, assume $C$ has gonality at most 3. We choose a $g^1_3$ containing the effective divisor
$D$. Then for each $P$ in $C(\bar{k})$, $\mbox{dim}|D+P| \ge 1$, so we can find a $g^1_4$ containing
$D+P$. These are all distinct as $P$ is not linearly equivalent to $Q$ for $P\ne Q$, so $D+P$ is
also not equivalent to $D+Q$.

If $C$ is isomorphic to a non-singular degree 5 plane curve ($C$ has a $g^2_5$), then the linear
system of lines through each $P\in C(\bar{k})$ gives a $g^1_4$ after removing the base point $P$.
These are clearly distinct for different $P$.

If $C$ has gonality 4 and $Y$ is a cone over an elliptic curve $E$, then the pullback under the
degree 2 projection of $C$ to $E$ of the infinitely many distinct $g^1_2$s on $E$ lead to 
infinitely many distinct $g^1_4$s on $C$ (the pullbacks of $|P+Q|$ and $|R+S|$ can be the same on
$C$ only if $P+Q=R+S$ in the group law of $E$).
\medskip

\noindent (a)$\Rightarrow$(c)

\noindent As noted before, the inverse of a birational parametrisation $\Prj^2 \rightarrow Y$ by cubic
polynomials (over $\bar{k}$) maps $\Cc$ birationally to a plane sextic $C_1$. If $C_1$ had
a singularity of multiplicity at least 3, projection from it would give lead to a map from
$C$ to $\Prj^1$ of degree at most 3, contradicting the gonality 4 assumption. Thus, $C_1$
only has double points.
\medskip

\noindent (c)$\Rightarrow$(b)

\noindent Let $C_1$ be a plane sextic birational to $C$ with only double points. Since we are
assuming that $\mbox{char}(k) \ne 2$, any singularity is of type $A_n$ for $n \ge 1$
(analytically isomorphic to $y^2=x^{n+1}+\mbox{higher powers of}\ x$). Such a singulatity
is resolved by sequence of $\lfloor (n+1)/2\rfloor$ blowings up: at each stage, if we have
an $A_n$ singularity $P$ on the transformed curve in the blown-up plane, after blowing up at
$P$, the strict transform of the curve has a single $A_{n-2}$ singularity over $P$ if $n>2$,
a single nonsingular point over $P$ if $n=2$ (simple cusp), or 2 nonsingular points over $P$
if $n=1$ (node). We continue getting singularities of multiplicity 2 until the singularity is
resolved. By \cite{H77}, the arithmetic genus of the strict transform of $C_1$ drops by one
at each blow up until it reaches 6, when it must be isomorphic to $C$ and non-singular. Thus we have
to blow up exactly 4 times and then the blow-up of $\Prj^2$ is $Y_1$, a degree 5 Del Pezzo
surface. $Y_1$ is a degenerate Del Pezzo if there is an $A_n$ singularity with $n > 2$, when
we have to blow up infinitely near points. Note that for all possible singularity types of
$C_1$ ($4(A_1/A_2)$, $2(A_1/A_2)+1(A_3/A_4)$, $2(A_3/A_4)$, $1(A_1/A_2)+1(A_5/A_6)$,
$1(A_7/A_8)$), the sequence of blow-up points is "almost general" as per Definition 1, Section III.2
of \cite{Dem76}, so $Y_1$ is a (possibly degenerate) Del Pezzo. In particular, if a line went through
all 4 blow-up points (including infinitely near ones), then its intersection number with $C_1$ would
be at least 8, which is impossible as $C_1$ is irreducible of degree 6.
The strict transform of $C_1$ is isomorphic to $C$, so we will identify it with $C$.

Let $H$ denote the divisor class of $Y_1$ that is the total transform of the class of a
line in $\Prj^2$ and $E_1,E_2,E_3,E_4$ the total transforms of the exceptional curves
generated in the blow-ups (so $E_i^2=1$ and $E_i\cdot E_j=0$ for $i\ne j$).
$C$ is an irreducible divisor of $Y_1$ rationally equivalent to $6H-2E_1-2E_2-2E_3-2E_4=-2K_{Y_1}$
where $K_{Y_1}$ is the canonical class on $Y_1$. By adjunction
(Prop. 8.20, Chap. II, \cite{H77}), the
canonical class $K_C$ on $C$ is the restriction of the class $-K_{Y_1}$ of $Y_1$. Also as
$H^1(Y_1,O(K_{Y_1}))=0$ ($Y_1$ is rational), the standard cohomology sequence shows that
the restriction map on global sections $H^0(Y_1,O_{Y_1}(-K_{Y_1})) \rightarrow H^0(C,O_C(K_C))$
is an isomorphism. Thus, the anticanonical map of $Y_1$ into $\Prj^5$ restricts to the canonical
embedding of $C$ onto $\Cc$. Note that $-K_{Y_1}$ is birational on $Y_{1}$ and $1-1$ outside of
some possible (-2)-curves, so the canonical map on $C$ is birational. Hence, $C$ is not 
hyperelliptic and its canonical map gives an embedding.
Let the image of $Y_1$ be $Y_0 \supset \Cc$. Note that $Y_0$ is
singular when $Y_1$ is degenerate: fundamental cycles of (-2)-curves are contracted to (simple)
singular points (see \cite{Dem76}).

Now, the one-dimensional linear system $|D|$ on $Y_1$, where $D=2H-E_1-E_2-E_3-E_4$, restricts to
a complete $g^1_4$ on $C$. It consists of the strict transforms on the pencil of conics 
through the 4 blow-up points (if there are repeated blowups, some of the divisors in
the pencil may be the sum of the strict transform of a conic and some irreducible
(-1)-curves and/or (-2)-curves). It is of degree 4 on $C$ as
$D\cdot(-2K_{Y_1}) = 4$. It gives a {\it complete} $g^1_4$ because $-D-K_{Y_1}=H$ and so, by Serre
duality, $h^1(Y_1,O_{Y_1}(D-C))=h^1(Y_1,O_{Y_1}(D+2K_{Y_1}))=h^1(Y_1,O_{Y_1}(H))=0$. The last 
equality
holds by applying Riemann-Roch to $H$ and using $h^0(H)=3$, $h^2(H)=h^0(K_{Y_1}-H)=0$. It isn't 
hard
to see that the intersection of all of the divisors in $|D|$ is either empty or consists of a union of
(-2)-curves. However, $C$ is irreducible and $C\cdot E = (-2K_{Y_1})\cdot E = 0$, so $C$ doesn't
intersect any (-2)-curve $E$. Thus, the $g^1_4$ is also basepoint free.

This $g^1_4$ corresponds to the rulings of a degree 3, dimension
3 scroll $X$ containing $\Cc$ (this is true for any complete, basepoint free $g^1_4$). Further, Schreyer's
derivation of the canonical resolution using $X$ applies to give the description in section 4.1
(see sections 4 and 6.2 of \cite{Sch86}) and show that the minimal free resolution of $R/I_\Cc$ is of
shape \res. This means that $C$ must be of gonality 4 with no $g^2_5$.

We now show that the $Y$ associated to $X$ is precisely $Y_0$.

Firstly, $Y_0\subset X$. The two-dimensional linear spaces comprising the ruling of $X$ are the
linear spans of the supports of the divisors in the $g^1_4$. However, each such divisor is the
intersection of $\Cc$ with the image of a divisor $E$ in $|D|$. Since, $D\cdot(-K_{Y_1})=2$, these images
are of degree 2 and so lie in a plane which must be the span of the corresponding divisor on $\Cc$
except for a finite number of degenerate cases. As the divisors in $|D|$ cover $Y_1$, its image lies in
$X$.

Now $X$ is a scroll of type $S(1,1,1)$ or $S(2,1,0)$ in Schreyer's notation from \cite{Sch86}. The first
is non-singular and the second is a cone with a single singular point $P$. In the second case, if
$Y_0$ contained $P$ then it would be a singular point of $Y_0$. This is because $P$ is the intersection
of all of the ruling planes and so would lie in the intersection of all of the images of divisors in
$|D|$. As noted above, this intersection, if non-empty, consists of some irreducible 
(-2)-curves, so must map into singular points under $-K_{Y_1}$. Thus, in either case, $Y_1 \rightarrow
Y_0 \subset X$ factors through a map $Y_1 \rightarrow \Prj(\mathcal{E})$, where $\mathcal{E}$ is
a 3-dimensional locally-free sheaf on $\Prj^1$ and $\Prj(\mathcal{E})$ is the projective bundle
of which $X$ is the image. We abuse notation slightly by denoting the strict transforms of $Y_0$ and
$Y$ in $\Prj(\mathcal{E})$ by the same symbols. Let $H'$ denote the divisor class of the pullback
of the Cartier hyperplane class of $X$ and $R$ the class of the ruling. By standard theory,
$Pic(\Prj(\mathcal{E}))$ is generated by $H'$ and $R$ (see, e.g., Ex. 12.3, Chapter III of \cite{H77}). We have that
$H'^3 = 3$ (the degree of $X$), $H'^2\cdot R=1$ (as the rulings map to linear spaces in $\Prj^5$) and
$R^2=0$ in the cycle class group of $\Prj(\mathcal{E})$. We know that $Y$ is equivalent to
$2H'-R$. We want to show the same for $Y_0$. Let $Y_0$ be equivalent to $aH'+bR$ with $a,b \in \Z$.
$Y_0$ of degree 5 in $\Prj^5$ implies that $H'^2\cdot Y_0=5$. Also, $R$ pulls back to $D$ in $Y_1$,
which means that $Y_0\cdot R\cdot H' = D.(-K_{Y_1})=2$. Thus we get $a=2$ and $b=-1$ as required.

If $Y \ne Y_0$, since both are irreducible, $U = Y \cap Y_0$ would be a sum (with multiplicities)
of irreducible curves in $\Prj(\mathcal{E})$ with $U\cdot H'=(2H'-R)\cdot (2H'-R)\cdot H'=8$.
But $\Cc\subset U$ and $\Cc\cdot H'=10$. Thus $Y$ is $Y_0$ as claimed. We can also
note that $\Cc$ misses any singular locus of $X$, as it is non-singular and a complete intersection
in $X$.

Now we know that any $g^1_4$ on $C$ comes from a ruling on some $X$ which restricts to a pencil of
divisors on the unique $Y$, which is $Y_0$. This pulls back to a pencil, contained in $|D_1|$ say,
on $Y_1$.
Since the ruling induces a degree 4 divisor on $\Cc$ and $\Cc$ pulls back isomorphically to $C$ which
is equivalent to $-2K_{Y_1}$ on $Y_1$, $D_1\cdot K_{Y_1} = -2$. If $X$ is nonsingular, or $Y_0$
misses its unique singular point $P$, $D_1^2=0$. Otherwise, any two rulings meet transversally at
$P$, which must be singular point of $Y_0$, so $D_1=D_0+F$ where $F$ is the fixed part of the pencil -
a fundamental cycle of (-2)-curves - and $D_0$ is the variable part with 
$D_0^2 = 0$. As $C$ is disjoint from all irreducible (-2)-curves $E$ and $K_{Y_1}\cdot E=0$, we can 
replace $D_1$ by $D_0$ in this case. Thus any $g^1_4$ comes from the restriction of some pencil
of divisors within $|D_1|$ where $D_1^2=0$ and $D_1\cdot K_{Y_1} = -2$. Since a $g^1_4$ on $C$ is
complete, it is entirely determined by the class of any divisor within it, and so any pencil induced
from $D_1$ will only depend on the equivalence class of $D_1$ in $Pic(Y_1)$.

A simple computation shows that the only classes with self intersection 0 and intersection
number 2 with $-K_{Y_1}$ are the four classes $H-E_i$, $1\le i\le 4$, and $D=2H-E_1-E_2-E_3-E_4$
from above. Thus we have the stronger result (as expected from Brill-Noether) that there are at most
five distinct $g^1_4$s on $C$.

\end{proof}
\bigskip

\noindent {\it Remarks:} 1) In some cases, several members of the five classes listed at the end
of the last proof can restrict to the same $g^1_4$ on $C$. This can occur when the plane model
of $C$ has higher order $A_n$ singularities and one of the classes differs from another by
 a fixed component consisting of (-2)-curves [$C \equiv -2K_{Y_1}$ on $Y_1$ means that
(-2)-curves are the only effective divisors which don't intersect $C$].

2) When the plane model $C_1$ only has nodes and simple cusps, the five $g^1_4$s correspond
to the four pencils of lines through a singular point and the pencil of conics through all
four singular points.
\bigskip

\noindent {\it Example:} Let $C$ be birational to the plane curve $C_0$ with defining polynomial
(w.r.t. $x,y,z$ projective coordinates)
$$\begin{array}{c} x^2y^4+2x^3y^2z-xy^4z+ x^4z^2 + 2x^3yz^2 - x^2y^2z^2 - xy^3z^2 - 2y^4z^2 - x^3z^3\\ + 2x^2yz^3 - xy^2z^3 + y^3z^3 - x^2z^4 + 2xyz^4 + y^2z^4 \end{array}$$
$C_0$ has nodes ($A_1$ singularities) at $P_2 := (0:0:1)$ and $P_3 := (0:1:0)$ and a type
$A_3$ singularity at $P_1 := (1:0:0)$. $P_4$ will denote the infinitely near point above $P_1$
that needs to be blown up to resolve the $A_3$ singularity.
We write $L_{ij}$ for the line through $P_i$ and $P_j$
($L_{14}$ is the line through $P_1$ whose tangent direction corresponds to $P_4$).
$L_{12}$ doesn't pass through $P_4$ but $L_{13}$ does, so $L_{13}=L_{14}=L_{34}$
and $L_{24}$ doesn't exist.
Blowing up the points in the order $P_1$,$P_2$,$P_3$,$P_4$, we see that 
$E_1 = \hat{E}_1+E_4$ with $\hat{E}_1$ an irreducible (-2)-curve and
$E_2,E_3,E_4$ irreducible (-1)-curves.

$\hat{L}$, the strict transform of $L_{13}$, is an irreducible (-2)-curve.
Letting $E_{ij}$ denote the usual (-1)-classes $H-E_i-E_j$ on $Y_1$,
$E_{12}$ and $E_{23}$ give irreducible (-1)-curves, but $E_{13}=\hat{L}+E_4$,
$E_{14}=\hat{L}+E_3$, $E_{34}=\hat{L}+\hat{E}_1+E_4$ and $E_{24}=E_{12}+\hat{E}_1$.

We easily find that $|H-E_1|$, $|H-E_2|$ and $|H-E_3|$ are distinct pencils without fixed
points or components on $Y_1$, but $(H-E_4)\cdot\hat{E}_1 = D\cdot\hat{L} = -1$ means that
$\hat{E}_1$ and $\hat{L}$ are fixed components of the two respective pencils and that
$|H-E_4|=|H-E_1|+\hat{E}_1$ and $|D|=|H-E_2|+\hat{L}$. In this case, therefore, the five pencils
only restrict to 3 distinct $g^1_4$s on $C$.
\smallskip

Taking the canonical map $C_0 \rightarrow \Prj^5$ defined by the polynomials
$(x^2z:xy^2:xyz:xz^2:y^2z:yz^2)$ to give $\Cc$, we find explicitly that $Y$ is defined
by the 5 quadrics
$$z^2-xt,\quad ys - xt,\quad zs - xu,\quad zt - yu,\quad st - zu$$
and that there are indeed only 3 scrolls containing $\Cc$:
$$\begin{array}{l} X_1:\qquad ys-xt = zs-xu = zt-yu = 0\\
 X_2:\qquad  z^2-ys=zt-yu=st-zu=0\\
 X_3:\qquad  z^2-xt=zs-xu=st-zu=0  \end{array}$$
$Y$ has singularities at $p_1 := (1:0:0:0:0:0)$ and $p_2 :=(0:1:0:0:0:0)$, the images of
$\hat{E_1}$ and $\hat{L}$. $X_1$ is non-singular, $X_2$ has singular point $p_1$ and 
$X_3$ has singular point $p_2$. This illustrates that the $Xs$ for a given $C$ may be of
different type and that $Y$ may pass through the singular point of an $X$.

\begin{proposition}
\label{pln_prop_2}
A genus 6 curve $C$ has gonality 3 if and only if it is birationally equivalent over $\bar{k}$ to a
degree 6 plane curve with at least one singularity of multiplicity greater than 2.
\end{proposition}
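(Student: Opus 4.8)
The plan is to prove both directions by relating the gonality-3 condition to the multiplicity structure of a degree-6 plane model, using the dichotomy already established in Proposition~\ref{pln_prop_1}. For the forward direction, suppose $C$ has gonality 3. Then $C$ carries a $g^1_3$, and by Riemann-Roch the residual system $|K-D|$ for $D$ in the $g^1_3$ is a $g^2_6$. I would argue that this $g^2_6$ is base-point free and that the associated map $C \to \Prj^2$ is birational onto a degree-6 plane curve $C_1$: if it were not birational, the image would be a curve of degree $\le 3$ and $C$ would factor through a map of degree $\ge 2$ onto it, which (by the genus constraints, exactly as in the parenthetical arguments used for the elliptic-cone case and in the genus-5 plane-model discussion) would force the gonality to be even lower or produce a contradiction with the geometric genus being 6. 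Granting that $C_1$ is a birational degree-6 plane model, I then need to locate a point of multiplicity $\ge 3$. Here the cleanest route is to use the contrapositive of Proposition~\ref{pln_prop_1}: by that proposition, a degree-6 plane model with \emph{only} double points forces $C$ to have finitely many $g^1_4$s and, in particular, gonality exactly 4 with no $g^2_5$. Since our $C$ has gonality 3, its degree-6 plane model cannot have only double points, so $C_1$ must possess a singular point of multiplicity $\ge 3$.

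For the converse, suppose $C$ is birational to a degree-6 plane curve $C_1$ with a point $P$ of multiplicity $m \ge 3$. Then projection from $P$ gives a map $C \to \Prj^1$ of degree $6-m \le 3$, exactly as recalled in the general remarks of Section~2 (``projection from a point of multiplicity $m$ on a degree-$d$ plane model gives a degree $d-m$ map''). This immediately bounds the gonality of $C$ by $3$. To conclude that the gonality is \emph{exactly} 3, I would rule out the hyperelliptic case ($d=2$). The obstruction is that a degree-$6$ plane curve of geometric genus $6$ cannot be hyperelliptic with a multiplicity-$\ge 3$ point in a way that is consistent: one can check via the arithmetic-genus/adjunction bookkeeping (the same $\delta$-invariant accounting used in the (c)$\Rightarrow$(b) argument of Proposition~\ref{pln_prop_1}) that if $C$ were hyperelliptic the canonical image would be a rational normal curve of arithmetic genus $0$, whereas a birational degree-$6$ plane model has geometric genus $6$; more directly, a multiplicity-$\ge 4$ point would make the projection a degree-$\le 2$ map and I would need to show such a configuration is incompatible with geometric genus $6$ (a degree-6 plane curve with a point of multiplicity $4$ or $5$ has too much of its genus absorbed at that point, leaving geometric genus $<6$, which I would verify by the genus-drop formula $\binom{d-1}{2} - \sum \binom{m_i}{2}$).

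The two directions thus pivot on two already-available facts: Proposition~\ref{pln_prop_1} handles the ``only double points'' case (ruling out gonality 3 there), and the elementary projection principle converts a high-multiplicity point into a low-degree gonal map. The main obstacle I anticipate is the converse's requirement to pin the gonality at exactly $3$ rather than merely $\le 3$: I must exclude the hyperelliptic possibility and, simultaneously, confirm that the geometric genus really is $6$ given a multiplicity-$\ge 3$ point. The delicate subcase is a point of multiplicity exactly $3$ versus higher, and whether infinitely near singularities (as in the $A_n$ analysis of Proposition~\ref{pln_prop_1}) could conspire to either lower the genus below $6$ or admit a hyperelliptic structure. I would resolve this by the genus-defect computation: for a degree-6 plane curve, $\binom{5}{2}=10$ is the arithmetic genus, and geometric genus $6$ forces the total $\delta$-invariant contribution to be exactly $4$; a single point of multiplicity $m$ contributes at least $\binom{m}{2}$, so $m=3$ contributes $3$ (leaving room for one further node) and $m\ge 4$ contributes $\ge 6 > 4$, which is impossible. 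This simultaneously shows the multiplicity is exactly $3$ (with one additional simple double point), makes the projection a genuine degree-$3$ map, and rules out any hyperelliptic or lower-gonality degeneration, completing the equivalence.
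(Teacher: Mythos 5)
Your proposal has genuine gaps in both directions. In the forward direction (gonality $3$ $\Rightarrow$ plane model), the opening step is numerically wrong: for $D$ in a $g^1_3$ on a genus $6$ curve, $\deg(K-D)=10-3=7$, and Riemann--Roch gives $\dim|K-D|=3$, so $|K-D|$ is a $g^3_7$ mapping $C$ into $\Prj^3$, not a $g^2_6$. It is the residual of a $g^1_4$, not of a $g^1_3$, that is a $g^2_6$ (as the paper itself notes in Section 4). Consequently the birational degree $6$ plane model that your contrapositive-of-Proposition~\ref{pln_prop_1} argument needs is never constructed --- and constructing it is the real content of this direction; the contrapositive step itself, once a model exists, is fine and is a nice shortcut. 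One could try to repair your construction with $|K-D-P|$ (which is indeed a $g^2_6$ for every $P$, by Riemann--Roch and Clifford), but then base-point freeness and birationality must actually be proved: in particular you must exclude the map being $3$-to-$1$ onto a conic, which genuinely happens when $K-D-P\sim 2D$ and is \emph{not} ruled out by the gonality hypothesis (a trigonal curve carries such maps), and being $2$-to-$1$ onto a smooth cubic (excluded, e.g., by the Castelnuovo--Severi inequality). None of this is in your sketch. The paper avoids these issues entirely by an explicit construction: the trigonal canonical curve lies on a Hirzebruch surface $X_0$ or $X_2$ (as a divisor of class $3C_0+4f$, resp.\ $3C_0+7f$), and a concrete sequence of blow-ups and blow-downs carries that surface onto $\Prj^2$ and $C$ onto a degree $6$ curve with a triple point.

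In the converse direction, your $\delta$-invariant computation is correct and matches the paper's singularity analysis (total $\delta=4$ forces multiplicity exactly $3$, with either an ordinary triple point plus one further node/cusp, or a triple point with an infinitely near double point), and it does show the projection is a genuine degree $3$ map. But your final claim that this ``rules out any hyperelliptic or lower-gonality degeneration'' is a non sequitur: a degree $3$ map to $\Prj^1$ only bounds the gonality \emph{above} by $3$; it does not preclude $C$ from also carrying a $g^1_2$. Your earlier remark that a hyperelliptic curve has rational normal canonical image likewise derives no contradiction with anything you have established. A separate argument is required, and this is exactly where the paper does real work: the canonical system of $C$ is cut out on the sextic by the adjoint cubics at the singularities, and one checks directly (e.g., using reducible cubics formed by two lines through the triple point and a line through the remaining double point) that this system separates nonsingular points, so the canonical map is an embedding and $C$ is not hyperelliptic. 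Alternatively, Castelnuovo--Severi (a curve admitting both a degree $2$ and a degree $3$ morphism to $\Prj^1$ has genus $\le 2$) finishes this in one line. As written, both halves of your equivalence are incomplete.
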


\begin{proof}
Let $C$ be birationally equivalent to $C_1$, a degree 6 plane curve with a singularity $P$ of 
multiplicity of at least 3. Projection from $P$ gives a map to $\Prj^1$ of degree at most
3 and $C$ is hyperelliptic or has gonality 3. Since $C$ has genus 6, there are only two possibilities
for the singularities of $C_1$. Either there is an triple point that is resolved by a single blow-up and a
cusp or node, or there is a single triple point with a single infinitely-near
node or cusp. In either case, it is easy to see that the canonical linear system on $C_1$ , which is
given by the subsystem of plane cubics that satisfy the adjoint conditions at the singularities,
separates the non-singular points. Therefore, $C$ cannot be hyperelliptic. For example, in the first case
we can take reducible cubics consisting of two lines through the triple point and one line through the 
other singularity.

Conversely, let $C$ have gonality three. Then $C$ is isomorphic to a 3-section in a Hirzebruch surface,
which is swept out in the canonical embedding by the line spans of the divisors of a $g^1_3$. There are
two possibilities for the surface and the divisor class of $C$ within it: $X_0$ with $C \sim 3C_0+4f$
or $X_2$ with $C \sim 3C_0+7f$ in the notation of Chapter V, Section 2 of \cite{H77}. In either case,
we can find a birational isomorphism of $X_i$ onto $\Prj^2$ that maps $C$ onto a degree 6 curve.

In the $X_2$ case, let $P$ be any point on $C$ which doesn't lie on the distinguished (-2)-curve $C_0$.
We begin by blowing up $P$ and blowing down the strict transform of the fibre $f$ containing $P$.
This gives a birational isomorpism of $X_2$ to $X_1$ where the unique (-1)-curve is the
strict transform of $C_0$. We then blow down this (-1)-curve to get to $\Prj^2$. Tracing through
the intersections of transforms of $C$ with the curves that are blown down, it is easy to see
that the self-intersection of the overall strict transform of $C$ is 36 ($C^2=24$), so that it is a
degree 6 plane curve.
Furthermore it has a unique singular point that is a triple point (which resolves to the intersection
of $C$ with $f$ on $X_2$) with a single node or cusp above it on $X_1$, so we have the second case
above for singularities.

In the $X_0$ case, we proceed similarly. Here, the class of $C_0$ does not contain a unique curve but
gives a second ruling of $X_0$. The rulings $C_0$ and $f$ are distinguished here by $C\cdot f=3$ and
$C\cdot C_0=4$. We choose any point $P$ on $C$ to blow up but, this time, we will let $C_0$ denote
the unique curve in its class passing through $P$. Then, we blow down the strict transform of $f$
followed by the strict transform of $C_0$ as above. Again we see that the overall strict transform
$C_1$ of $C$ has degree 6 in the plane. If $f$ meets $C$ transversally at $P$, we get a triple point and
a node/cusp on $C_1$, which resolve to the intersection of the strict transforms of $C$ and $C_0$
and the strict transforms of $C$ and $f$ after the blowing up of $P$. $X_0$ is isomorphic to a
non-singular quadric surface in $\Prj^3$ and the birational map to $\Prj^2$ corresponds to projection
from the point $P$ on that model.
\end{proof}

\bibliographystyle{amsalpha}
\bibliography{mike.bib}
\end{document}